\title{Harmonic Analysis and Statistics of the First Galois Cohomology Group}
\author{Brandon Alberts \\ \href{mailto:balbert1@emich.edu}{balbert1@emich.edu}
\and Evan O'Dorney \\ \href{mailto:emo916math@gmail.com}{emo916math@gmail.com}
}
\date{\today}
\begin{document}

\maketitle

\begin{abstract}
We utilize harmonic analytic tools to count the number of elements of the Galois cohomology group $f\in H^1(K,T)$ with discriminant-like invariant $\inv(f)\le X$ as $X\to\infty$. Specifically, Poisson summation produces a canonical decomposition for the corresponding generating series as a sum of Euler products for a very general counting problem. This type of decomposition is exactly what is needed to compute asymptotic growth rates using a Tauberian theorem. These new techniques allow for the removal of certain obstructions to known results and answer some outstanding questions on the generalized version of Malle's conjecture for the first Galois cohomology group.
\end{abstract}

\section{Introduction}
Let $K$ be a number field. Extensions of $K$ with certain Galois groups and fixed resolvent can be parametrized by coclasses in the first cohomology group $H^1(K,T)$ of a Galois module $T$. It is natural to count such coclasses asymptotically by an invariant such as the norm of the discriminant or the product of the ramified primes.

The first author \cite{alberts2019} proved a result counting coclasses by discriminant-like invariants which are \textbf{Frobenian;} that is, they are determined at almost all places $p$ of $K$ by the Frobenius element $\Leg{F/K}{p}$ for some finite extension $F/K$ (we call the finitely many places where this is not satisfied \textbf{irregular places}). Moreover, the result also applies if we impose a local condition $L_p \subseteq H^1(K_p, T)$ at each place $p$, provided that these conditions are again Frobenian in the appropriate sense. Explicitly, this result can be interpreted as giving the asymptotic size of an ``infinite Selmer group", where for any family of local conditions $\mathcal{L}=(L_p)$ we define
\[
H^1_{\mathcal{L}}(K,T) = \{f\in H^1(K,T) : \res_p(f)\in L_p\}\,.
\]
If we allow $L_p$ to include non-trivial ramification at infinitely many places, this set may be infinite (for example, let $L_p=H^1(K_p,T)$ for all $p$ so that $H^1_\mathcal{L}(K,T) = H^1(K,T)$). We can still say something about the ``size" by talking about how the 1-coclasses are distributed with respect to some ordering. Given an admissible ordering $\inv : H^1(K,T) \to I_K$ valued in the group of fractional ideals (as in \cite{alberts2019}), we can define
\[
H^1_{\mathcal{L},\inv}(K,T;X) = \{f\in H^1_{\mathcal{L}}(K,T) : \mathcal{N}_{K/\Q}(\inv(f))<X\}\,,
\]
which is a finite set, and we can ask how this set grows as $X\to \infty$. We will often omit the ``$\inv$" from the subscript when it is clear from context.

We state the Asymptotic Wiles result proven in \cite{alberts2019} for comparison:
\begin{theorem*}[Asymptotic Wiles Theorem; Theorem 4.7 \cite{alberts2019}]
Let $T$ be a finite Galois $K$-module, and $\mathcal{L}=(L_p)_p$ and $\inv$ a family of local conditions and admissible ordering respectively. Suppose
\begin{enumerate}[(a)]
    \item $\mathcal{L}$ and $\inv$ are Frobenian;
    \item $L_p\le H^1(K_p,T)$ is a subgroup and, for almost all places $p$, $L_p$ contains $H^1_{ur}(K_p,T)$. For infinitely many places, $L_p$ is strictly larger than $H^1_{ur}(K_p,T)$;
    \item\label{it:div} For all places $p$, if $f,f'\in H^1(K_p,T)$ such that $\langle f|_{I_p}\rangle =\langle f'|_{I_p}\rangle\le H^1(I_p,T)$ then $\nu_p(\inv(f))=\nu_p(\inv(f'))$ (where $\nu_p(\mathfrak{a})$ equals the order to which $p$ divides $\mathfrak{a}$).
\end{enumerate}
Then
\[
    |H^1_{\mathcal{L}}(K,T;X)| \sim c_\inv(K,\mathcal{L}) X^{1/a_\inv(\mathcal{L})}(\log X)^{b_\inv(\mathcal{L})-1}
\]
for explicit positive integers $a_{\inv}(\mathcal{L})$ and $b_{\inv}(\mathcal{L})$, and an explicit positive real number $c_{\inv}(K,\mathcal{L})$.
\end{theorem*}

The method of proof in \cite{alberts2019} consists of using the Greenberg-Wiles identity to rewrite the Dirichlet series counting such coclasses as a finite sum of Frobenian Euler products, and then applying a Tauberian theorem to extract information about the average size of the coefficients. Due to the close relationship to the Greenberg-Wiles identity, and viewing $|H^1_{\mathcal{L}}(K,T;X)|$ as the asymptotic size of an ``infinite Selmer group", this result is dubbed the Asymptotic Wiles Theorem.

Given a compact box (i.e. some family of local conditions $\mathcal{L}=(L_p)$ for which $L_p$ is a subgroup of $H^1(K_p,T)$ at all places and $L_p=H^1_{ur}(K_p,T)$ at all but finitely many places), the Greenberg-Wiles identity acts as a local-to-global principle expressing $|H^1_{\mathcal{L}}(K,T)|$ as a product of local factors. Since Greenberg-Wiles requires the $L_p$ to be \emph{subgroups}, some adjusting is necessary to get information about more general subsets such as $H^1_{\mathcal{L}}(K, T; X)$. In particular, there is a need for hypothesis \ref{it:div}, which we might call the property of being \textbf{constant on divisions}, where a \textbf{division} of a group $G$ is a minimal subset $C\subset G$ closed under conjugation (i.e. $g\in C$ implies $g^h\in C$) and under invertible powers (i.e. if $n$ is coprime to the order of $G$ and $g\in C$, then $g^n \in C$). In particular, \ref{it:div} implies that $\nu_p\inv:H^1(K_p,T) \to \Z$ is constant on divisions of the group $H^1(K_p,T)$ and is equivalent to requiring $\nu_p(\inv f)$ to depend only on the subgroups of $H^1(I_p, T)$ in which $f|_{I_p}$ lies for each $p$.

In this paper, we use harmonic analysis on adelic cohomology as a replacement for the Greenberg-Wiles identity, modeled after the celebrated use of harmonic analysis on the adeles in Tate's thesis \cite{tate1950}. Harmonic analysis on adelic cohomology has been used by Frei-Loughran-Newton \cite{frei-loughran-newton2018,frei-loughran-newton2019} as an alternate approach to counting abelian extensions of number fields with prescribed local conditions, and by the second author \cite{odorney2021reflection} to give a streamlined proof of the Ohno-Nakagawa reflection theorem and its generalizations. Poisson summation, in this setting, is a suitable generalization of Greenberg-Wiles that will allow us to lift some of the hypotheses in the main theorem to prove the following extended version of the Asymptotic Wiles theorem:
\begin{theorem}\label{thm:comparisonthm}\label{thm:correctedmain}
Let $T$ be a finite Galois $K$-module, and $\mathcal{L}=(L_p)_p$ and $\inv$ a family of local conditions and admissible ordering respectively. Suppose
\begin{enumerate}[(a)]
    \item $\mathcal{L}$ and $\inv$ are Frobenian.
    \item For all but finitely many places $p$, $L_p$ is a union of cosets of $H^1_{ur}(K_p,T)$ containing the identity coset.
    \item\label{it:GW} The conditions $L_p$ are viable, that is, $H^1_{\mathcal{L}}(K, T) \neq \emptyset$.
\end{enumerate}
Then
\[
    \size{H^1_{\mathcal{L}}(K,T;X)} \sim c_\inv(K,\mathcal{L}) X^{1/a_\inv(\mathcal{L})}(\log X)^{b_\inv(\mathcal{L})-1}
\]
for explicit positive integers $a_{\inv}(\mathcal{L})$ and $b_{\inv}(\mathcal{L})$, and an explicit positive real number $c_{\inv}(K,\mathcal{L})$.
\end{theorem}

This is a significant improvement, in particular weakening (b) by allowing $L_p$ to be non-subgroups and completely removing the constancy-on-divisions hypothesis \ref{it:div} from the original result. Previously studied methods for counting number fields ordered by an arbitrary invariant (such as Wright's work on abelian extensions \cite{wright1989}) do not require a constancy-on-divisions assumption, suggesting that it is merely an artifact of the methods being used in \cite{alberts2019}. By using harmonic analysis to bypass this condition, we verify this intuition.

Instead we have a simple viability condition (\ref{it:GW} of Theorem \ref{thm:comparisonthm}), which is needed in order to avoid pathologies connected with the Grunwald--Wang theorem. For example, $H^1(\Q, C_8)$ contains no elements that are totally inert at $2$, so imposing a local condition $L_2 \subset H^1(\Q_2, C_8)$ consisting only of inert coclasses would give us no global elements to count at all. This issue will be discussed at greater length in Section \ref{sec:viable}.

As a particular example, if $G \subseteq S_n$ is a faithful transitive representation of $G$, the discriminant $\disc(f)$ of the degree-$n$ \'etale algebra attached to a coclass $f \in H^1(K,G)$ is a natural invariant to order coclasses in $H^1(K,G)$ by. (A more in-depth discussion of discriminants of \'etale algebras is found in \cite[Definition 3.2]{alberts2019}.) It satisfies the constancy-on-divisions property at all but finitely many places, but some issues occur at wildly ramified places. The first author uses this to prove upper and lower bounds on $|H^1_{\mathcal{L}}(K,T;X)|$ of the same order of magnitude in \cite{alberts2019} when $T\normal G$ has an induced Galois module structure given by $x.t=\pi(x)t\pi(x)^{-1}$ for some $\pi:G_K\to G$, and remarks that constancy on divisions is the obstruction to producing the asymptotic main term (in this setting $H^1(K,T)\to H^1(K,G)$ has finite kernel, and thus $H^1(K,T)$ can still be ordered by the discriminant of the \'etale algebra corresponding to the image in $H^1(K,G)$). Theorem \ref{thm:comparisonthm} is a sufficient generalization to prove the following corollary, which counts nonabelian extensions with specified nonabelian Galois group $G$ and specified resolvent by an abelian normal subgroup. In the language of \cite{alberts2019}, this counts $(T\normal G)$-towers ordered by discriminant, where $T$ is an abelian normal subgroup of $G$.
\begin{corollary}\label{cor:surj}
Let $T$ be an abelian normal subgroup of a finite group $G$, and $\pi:\Gal(\overline{K}/K)\rightarrow G$ a homomorphism with $T\pi(\Gal(\overline{K}/K))=G$ (or equivalently $\pi$ surjects onto $G/T$), defining a Galois action on $T$ by $x.t=\pi(x)t\pi(x)^{-1}$.
\begin{itemize}
\item[(i)]{If $\mathcal{L}$ and $\inv$ satisfy the hypotheses of Theorem \ref{thm:comparisonthm} and $S$ is the set of irregular places, then the limit
\[
\lim_{X\to\infty}\frac{\Size{\{f\in H^1_{\mathcal{L}}(K,T;X) : f*\pi\text{ surjective}\}}}{|H^1_{\mathcal{L}}(K,T;X)|}
\]
converges, where $(f*\pi)(x)=f(x)\pi(x)$ is understood to apply to a representative of $f$ in $Z^1(K,T)$. Moreover, the limit is
\begin{itemize}
    \item[(a)]{positive if $\pi$ is surjective,}
    \item[(b)]{positive if $T=\langle f_p(I_p) : f_p\in L_p,p\not\in S\rangle$, and}
    \item[(c)]{equal to $1$ if $T=\langle f_p(I_p) : f_p\in L_p^{[a_\inv(\mathcal{L})]},p\not\in S\rangle$ where $L_p^{[m]}=\{f\in L_p : \nu_p(\inv(f)) = m\}.$}
\end{itemize}
}

\item[(ii)]{
If $G\subset S_n$ is a transitive representation of $G$, then the invariant
\[
\disc_{\pi}(f) = \disc(f*\pi)
\]
satisfies the hypotheses of Theorem \ref{thm:comparisonthm}, where $\disc:\Hom(\Gal(\overline{K}/K),S_n)\to I_K$ is the discriminant on the associated \'etale algebra of degree $n$.
}
\end{itemize}
\end{corollary}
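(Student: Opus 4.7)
For part (ii), the task is to verify that $\disc_\pi$ is Frobenian and admissible. At places $p$ where both $\pi$ and $f$ are unramified, $(f*\pi)|_{I_p}$ is trivial, so $\nu_p(\disc_\pi(f)) = 0$. At places where $\pi$ is unramified but $f$ may ramify (all but finitely many), we have $(f*\pi)|_{I_p} = f|_{I_p}$, and the Artin conductor of the étale algebra attached to $f*\pi$ depends only on the $I_p$-subgroup generated by $f|_{I_p}$, twisted by $\pi(\operatorname{Frob}_p)$; this is Frobenian data. The remaining places (wildly ramified, or places where $\pi$ itself is ramified) are finite in number and absorbed into the irregular set. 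Admissibility (the growth bounds on $\nu_p$) transfers directly from admissibility of $\disc$ on $S_n$-étale algebras as recorded in \cite[Definition 3.2]{alberts2019}.

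Part (i) is the main content. My plan is Möbius inversion over the lattice $\Lambda = \{H \leq G : HT = G\}$, since for any $f \in Z^1(K,T)$ the image of $f*\pi$ automatically lies in some $H \in \Lambda$ (as $f*\pi \equiv \pi \pmod{T}$ surjects onto $G/T$). For each $H \in \Lambda$ admitting a base cocycle $f_H \in Z^1(K,T)$ with $f_H*\pi$ factoring through $H$, the set of $f$ with image of $f*\pi$ contained in $H$ is the coset $f_H + Z^1(K, T_H)$ with $T_H := T \cap H$. After passing to cohomology and shifting the local conditions by $f_H$, we obtain
\[
N_H(X) := \#\{f \in H^1_{\mathcal{L}}(K,T;X) : \operatorname{im}(f*\pi) \leq H\} = |H^1_{\mathcal{L}_H}(K, T_H; X)|
\]
for shifted local conditions $\mathcal{L}_H$ on $T_H$. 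Choosing $f_H$ unramified outside a finite set of places keeps $\mathcal{L}_H$ Frobenian and a union of $H^1_{ur}$-cosets almost everywhere, so Theorem \ref{thm:comparisonthm} applies and yields $N_H(X) \sim c_H X^{1/a_H}(\log X)^{b_H - 1}$. Möbius inversion gives
\[
\#\{f : f*\pi \text{ surjective}\} = \sum_{H \in \Lambda} \mu_\Lambda(H, G)\, N_H(X),
\]
from which convergence of the stated ratio follows.

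The crux is tracking the leading exponents $(a_H, b_H)$ across the Möbius sum and comparing them to $(a_\inv(\mathcal{L}), b_\inv(\mathcal{L}))$, which controls the $H = G$ term and hence the denominator. Since $a_\inv(\mathcal{L})$ is computed from dominant inertia classes realized inside $T$, shrinking the coefficient module from $T$ to $T_H \subsetneq T$ generally forces $a_H \geq a_\inv(\mathcal{L})$, with equality only when $T_H$ retains every dominant-weight inertia class available in $\mathcal{L}$. Under hypothesis (c), no proper $H$ retains all of the top-weight inertia, so $(a_H, b_H)$ is strictly lower order than $(a_\inv(\mathcal{L}), b_\inv(\mathcal{L}))$ and only $H=G$ contributes to the top-order asymptotic, giving density $1$. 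Under hypothesis (b) the proper-$H$ terms may share the top order, but the inertia-generation hypothesis forces the Möbius-weighted leading coefficient to be strictly positive. Under hypothesis (a) the vanishing cocycle $f=0$ lies in the surjective set, which combined with a local perturbation argument at unramified places upgrades non-emptiness to positive density.

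The main obstacle I anticipate is the careful choice of base cocycles $f_H$ and the verification that the shifted conditions $\mathcal{L}_H$ retain the Frobenian $H^1_{ur}$-coset structure required by Theorem \ref{thm:comparisonthm}, particularly at wildly ramified and $\pi$-ramified places; once this and the $(a_H, b_H)$ bookkeeping are settled, the positivity statements reduce to arithmetic on leading terms of the Möbius sum.
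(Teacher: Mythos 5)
Your proposal follows essentially the same route as the paper: Möbius inversion over subgroups $H$ with $HT=G$, translation by a base coclass $f_H$ to identify $\{f : \operatorname{im}(f*\pi)\le H\}$ with a counting problem for the module $H\cap T$ under shifted local conditions, application of Theorem \ref{thm:comparisonthm} to each term, and comparison of $a$- and $b$-invariants for (a)--(c), with part (ii) reduced to checking that $\disc_\pi$ is admissible and Frobenian. The only cosmetic difference is that the paper indexes the inclusion-exclusion by $T$-conjugacy classes of subgroups (since the image of $f*\pi$ is only well-defined up to $T$-conjugacy on cohomology classes), a bookkeeping point your passage from $Z^1$ to $H^1$ should make explicit.
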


Corollary \ref{cor:surj} is a generalization of \cite[Theorem 5.3]{alberts2019}, and for the most part the proof is the same after replacing the Asymptotic Wiles' Theorem in \cite{alberts2019} with the more powerful Theorem \ref{thm:comparisonthm}. In particular, as described in \cite{alberts2019}, $\disc_\pi$ satisfies condition $(i)(b)$ so that the number of towers with bounded discriminant
\[
N(L/K,T\normal G;X) = \Size{\{f\in H^1_{\disc_\pi}(K,T;X): f*\pi\text{ surjective}\}}
\]
is asymptotic to
\[
c X^{1/a(T)} (\log X)^{b(K,T(\pi))-1}
\]
for some positive constant $c$ as $X\to \infty$. This proves the generalization of Malle's conjecture to towers for $T$ abelian described in \cite{alberts2019} with an exact asymptotic.

In addition to allowing us to relax the hypotheses and prove the asymptotic main terms, the harmonic analytic method has noteworthy elegance. The proof found in this note is many times shorter than the original proof in \cite{alberts2019}, and it naturally motivates the resulting finite sum of Euler products as the sum over the dual arising in a Poisson summation.

\section{Fourier analysis on adelic cohomology}

Our goal is to study the average value of certain functions over the global cohomology classes $H^1(K,T)$. Given such a function $w$, we are looking to understand the summation
\[
\sum_{f\in H^1(K,T)} w(f)\,.
\]
This can appear as an average of an arithmetic function if $w(f)=w_X(f)$ is zero when $\inv(f)\ge X$, so that we can consider the limiting behavior as $X\to \infty$. It can also appear as a Dirichlet series, such as when $w(f) = \inv(f)^{-s}$, of which we are interested in studying the meromorphic continuation. (Note that in many cases these two examples are intimately related, and results for one can imply results for the other.)

The idea is to realize $H^1(K,T)$ as a discrete subgroup of some locally compact group, so that $\sum w(f)$ appears to be one side of a Poisson summation. We recall the Poisson formula for locally compact abelian groups (see, for instance, Hewitt and Ross \cite{hewitt-ross1970}, p.~246):

\begin{theorem} \label{thm:Poisson-0}
Let $G$ be a locally compact abelian group and $w : G \to \C$ a continuous function of class $L^1$. Suppose that $G_0 \subseteq G$ is a closed subgroup such that
\begin{enumerate}[$($a$)$]
  \item\label{it:poi-a-w'} For each $x \in G$, the integral
  \[
    w'(x) = \int_{y \in G_0} w(x + y)\, dy
  \]
  is absolutely convergent;
  \item\label{it:poi-b-w'} The resulting function $w' : G/G_0 \to \C$ is $L^1$;
  \item\label{it:poi-c-fo} Its Fourier transform $\widehat{w'}$ is $L^1$ on $(G/G_0)^{\vee} \cong G^\vee/G_0^\perp$ (where $G_0^\perp$ is the annihilator of $G_0$).
\end{enumerate}
Then, with respect to an appropriate scaling of the measures on $G_0$ and $G_0^\perp$,
\[
  \int_{x \in G_0} w(x)\, dx = \int_{y \in G_0^\perp} \hat w(y)\, dy.
\]
\end{theorem}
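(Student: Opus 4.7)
The plan is to follow the classical derivation of Poisson summation in the abstract LCA setting: average $w$ over the subgroup $G_0$ to produce a function $w'$ on the quotient $G/G_0$, compute the Fourier expansion of $w'$ on $(G/G_0)^\vee \cong G_0^\perp$, and evaluate it at the origin. Hypotheses \ref{it:poi-a-w'}--\ref{it:poi-c-fo} are precisely what is needed to make the averaging, the Fourier transform, and the Fourier inversion steps rigorous in turn.

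First, \ref{it:poi-a-w'} guarantees that the averaged function
\[
w'(x) = \int_{y \in G_0} w(x+y)\,dy
\]
is defined for every $x \in G$ and, by construction, descends to a function on $G/G_0$; then \ref{it:poi-b-w'} places it in $L^1(G/G_0)$. Next, I would compute the Fourier transform of $w'$ on $G/G_0$. For a character $\chi \in (G/G_0)^\vee \cong G_0^\perp$, the key observation is that $\chi$ is trivial on $G_0$, which folds the inner integral defining $w'$ into the outer one via the Weil quotient formula:
\begin{align*}
\widehat{w'}(\chi)
 &= \int_{G/G_0} w'(x)\, \overline{\chi(x)}\,dx
  = \int_{G/G_0}\! \int_{G_0} w(x+y)\, \overline{\chi(x+y)}\,dy\,dx \\
 &= \int_{G} w(z)\, \overline{\chi(z)}\,dz
  = \hat{w}(\chi).
\end{align*}
Thus $\widehat{w'}$ is simply the restriction of $\hat{w}$ to $G_0^\perp$. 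By \ref{it:poi-c-fo}, $\widehat{w'}$ is $L^1$ on $(G/G_0)^\vee$, so pointwise Fourier inversion on $G/G_0$ at the identity gives
\[
w'(0) = \int_{G_0^\perp} \widehat{w'}(\chi)\,d\chi = \int_{G_0^\perp} \hat{w}(\chi)\,d\chi,
\]
while directly from the definition $w'(0) = \int_{G_0} w(y)\,dy$. Comparing the two expressions for $w'(0)$ yields the desired identity.

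The main obstacle, and really the only nontrivial content, is to pin down compatible Haar measures so that all of the pieces fit together. One must simultaneously arrange that (i) the decomposition $\int_G = \int_{G/G_0}\int_{G_0}$ holds with the chosen measures (Weil's quotient-measure formula); (ii) the Pontryagin identification $(G/G_0)^\vee \cong G_0^\perp$ carries the dual of the quotient measure on $G/G_0$ to the chosen measure on $G_0^\perp$; and (iii) the measures on $G_0$ and $G_0^\perp$ are Pontryagin-dual, so that Fourier inversion holds with constant $1$ rather than an extraneous scalar. This is precisely the content of the phrase ``an appropriate scaling of the measures on $G_0$ and $G_0^\perp$'' in the statement. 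A secondary technical point is justifying \emph{pointwise} (as opposed to $L^1$-a.e.) Fourier inversion at $0$; this follows from the standard fact that a continuous $L^1$ function on an LCA group with $L^1$ Fourier transform satisfies Fourier inversion at every point, combined with the continuity of $w'$, which in turn follows from the continuity of $w$ together with a dominated-convergence argument based on \ref{it:poi-a-w'}.
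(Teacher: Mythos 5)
The paper does not prove this theorem at all: it is quoted as a known result from Hewitt and Ross, so there is no in-paper argument to compare against. Your proof is the standard derivation of abstract Poisson summation (average over $G_0$ via Weil's quotient formula, identify $\widehat{w'}$ with the restriction of $\hat w$ to $G_0^{\perp}$, invert at the identity), and the outline is correct; hypotheses (a), (b), (c) are used exactly where you say they are, and your discussion of the measure normalizations is the right accounting of the phrase ``appropriate scaling.'' The one step that is not fully justified as written is the final appeal to \emph{pointwise} Fourier inversion at $0$. The inversion theorem gives that $w'$ agrees almost everywhere with a continuous function whose value at $0$ is $\int_{G_0^\perp}\hat w$, so you need $w'$ itself to be continuous at $0$; your proposed dominated-convergence argument requires an integrable majorant for $y \mapsto w(x+y)$ that is locally uniform in $x$, and hypothesis (a) only supplies pointwise absolute convergence, not such a majorant. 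This is a genuine (if minor) gap shared with the bare statement of the theorem; in the paper's actual application it is harmless, since there $G_0$ is discrete, $w$ is locally constant (periodic under a compact open subgroup $Y$), and the averaged function is bounded by $\frac{\#(G_0\cap Y)}{\mu(Y)}\int_G|w|$ uniformly, so $w'$ is visibly continuous. A clean fix in general is either to add continuity of $w'$ as a hypothesis (as the source effectively does) or to note that local uniformity of the convergence in (a) suffices.
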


In our case, we will realize $G_0=H^1(K,T)$ as a discrete group, so that
\[
\int_{f\in H^1(K,T)} w(f)\, df = \sum_{f\in H^1(K,T)} w(f)
\]
(up to scaling the measure). 

The goal of this section is to set up the other features of Poisson summation in this setting, including the definition of $G$ and the corresponding groups $G^\vee$ and $G_0^{\perp}$.

\begin{definition}
The \textbf{adelic cohomology} of a Galois $K$-module $T$ is the restricted direct product
\[
  H^1(\A_K, T) \coloneqq \sideset{}{'}\prod_p H^1(K_p, T)_{H^1_{ur}(K_p, T)}\,,
\]
or more explicitly
\[
H^1(\A_K,T) \coloneqq \bigg\{ (f_p)\in \prod_p H^1(K_p,T) \bigg\vert f_p\in H^1_{ur}(K_p,T) \text{ for all but finitely many }p\bigg\}.
\]
\end{definition}
When each $H^1(K_p,T)$ is given the discrete topology, $H^1(\A_K,T)$ is a locally compact abelian group. Its Pontryagin dual, by local Tate duality, is isomorphic to $H^1(\A_K, T^*)$ where $T^* = \Hom(T, \mu)$ is the Tate dual of $T$.

There exists a map
\[
  i_T : H^1(K, T) \to H^1(\A_K, T)
\]
coming from a choice of embeddings $\bar K \hookrightarrow \bar K_p$, which we fix throughout. We gather the necessary information on this map from the Poitou-Tate nine-term exact sequence \cite[Theorem 8.6.10]{neukirch-schmidt-wingberg2008}, which in the case that $S$ is the set of all places is given by
\[
\begin{tikzcd}
0 \rar & H^0(K,T) \rar & H^0(\A_K,T) \rar & H^2(K,T^*)^{\vee} \rar &{}\\
\rar & H^1(K,T) \rar{i_T} & H^1(\A_K,T) \rar{(i_{T^*})^\vee} & H^1(K,T^*)^\vee \rar &{}\\
\rar & H^2(K,T) \rar & H^2(\A_K,T) \rar & H^0(K,T^*)^\vee\rar & 0,
\end{tikzcd}
\]
where each of the homomorphisms are continuous and the nine terms have the following topologies respectively: finite, compact, compact, discrete, locally compact, compact, discrete, discrete, finite.

Although $i_T$ need not be injective, we see that the kernel is the continuous image of a compact group into a discrete cocompact group, and so is necessarily finite. Moreover, the middle row shows that the images of $i_T$ and $i_{T^*}$ are dual lattices in their respective cohomology groups. We thus derive the following corollary of Poisson summation:
\begin{theorem} \label{thm:Poisson}
Normalize the measure on $H^1(K_p, T)$ so that each single point has measure
\[
  \frac{1}{\size{H^0(K_p, T)}},
\]
and take the product of these measures as a measure on $H^1(\A_K, T)$ so that the compact subgroup
\[
  \prod_{p\text{ finite}} H^1_{ur}(K_p, T)
\]
has measure
\[
  \prod_{p \mid \infty} \frac{1}{\size{H^0(K, M)}}.
\]
Then for any function $w:H^1(\A_K,T)\rightarrow \C$ satisfying the hypotheses of Theorem \ref{thm:Poisson-0} for $G = H^1(\A_K,T)$ and $G_0 = H^1(K,T)$,
\[
  \sum_{f \in H^1(K,T)} w(f) = \frac{\size{H^0(K,T)}}{\size{H^0(K,T^*)}} \sum_{g \in H^1(K,T^*)} \hat{w}(g),
\]
where we abuse notation and write $w:H^1(K,T)\to \C$ for the pullback of $w:H^1(\A_K,T)\to \C$ along $i_T$.
\end{theorem}
\begin{proof}
The image $\im(\iota_T)$ is a discrete subgroup of $H^1(\A_K,T)$, and the Poitou-Tate nine-term exact sequence implies $\im(\iota_{T^*})=\im(\iota_T)^{\perp}$. Thus, Poisson summation (Theorem \ref{thm:Poisson-0}) implies that there exists a constant $c$ such that
\[
\int_{f\in \im(\iota_T)} w(f) \, df = c\cdot \int_{g\in \im(\iota_{T^*})} \hat{w}(g)\, dg.
\]
The constant $c$ depends on how the measures $df$ and $dg$ are scaled. We specified a scaling at the beginning, so we cannot assume the two integrals are equal on the nose. We know that $\im(\iota_T)$ and $\im(\iota_{T^*})$ are discrete, so that the above is equivalent to
\[
\sum_{f\in \im(\iota_T)} w(f) = c\cdot \sum_{g\in \im(\iota_{T^*})} \hat{w}(g).
\]
Recalling that $w$ is defined on $H^1(\A_K,T)$ and that the corresponding map $w:H^1(K,T) \rightarrow \C$ is defined to be the pullback along $\iota_T$, we see in particular that $w(f h) = w(f)$ for any $h\in \ker(\iota_T)$. That same statement is true for $T$ replaced by $T^*$, so
\begin{equation} \label{eq:Poisson_w}
    \sum_{f\in H^1(K,T)} w(f) = \frac{c\size{\ker(\iota_{T^*})}}{\size{\ker(\iota_T)}} \sum_{g\in H^1(K,T^*)}\hat{w}(g).
\end{equation}
It now suffices to determine the constant coefficient
\[
c' = \ds \frac{c\size{\ker(\iota_{T^*})}}{\size{\ker(\iota_T)}},
\]
which is independent of $w$. For this, we can simply take for $w$ any function for which either sum in \eqref{eq:Poisson_w} is nonzero.

We note that when $w = \1_L$ is the characteristic function of a compact open box $L = \prod_p L_p$, the left-hand side of \eqref{eq:Poisson_w} is the cardinality of the \emph{Selmer group}
\[
  \Sel(L) = \left\{f \in H^1(K, T) : f \in L_p \, \forall p\right\}.
\]
(If $i_T$ has nontrivial kernel, it is necessarily contained in $\Sel(L)$ for every $L$.)
With our normalizations, the Fourier transform of $w_p = \1_{L_p}$ as a weighting on $H^1(K_p, T)$ is
\[
  \hat w_p = \frac{\size{L_p}}{\size{H^0(K, T)}} \1_{L_p^\perp},
\]
and by the multiplicative definition of the pairing,
\[
  \hat w = \prod_p \hat w_p = \prod_p \frac{\size{L_p}}{\size{H^0(K, T)}} \1_{L^\perp}.
\]
So \eqref{eq:Poisson_w} becomes an identity relating the Selmer groups of $L$ and $L^\perp$:
\[
  \frac{\size{\Sel(L)}}{\size{\Sel(L^\perp)}} = c' \prod_p \frac{\size{L_p}}{\size{H^0(K_p, T)}}.
\]
This identity is none other than the \emph{Greenberg-Wiles formula}
\[
  \frac{\size{\Sel(L)}}{\size{\Sel(L^\perp)}} = \frac{\size{H^0(K,T)}}{\size{H^0(K,T^*)}} \prod_p \frac{\size{L_p}}{\size{H^0(K_p, T)}}
\]
(see \cite[Theorem 2.19]{darmon-diamond-taylor1995} or \cite[Theorem 3.11]{jorza_notes2013}), where the constant coefficient
\[
  c' = \ds \frac{\size{H^0(K,T)}}{\size{H^0(K,T^*)}}
\]
arises out of a global Euler-characteristic computation.
\end{proof}

The use of Theorem \ref{thm:Poisson} separates into two cases:
\begin{enumerate}
    \item If $w$ is \textbf{periodic} with respect to a compact open subgroup of the form
    \[
    \prod_{p\not\in S} H^1_{ur}(K_p,T)
    \]
    for some finite set of places $S$, then $\hat{w}$ has compact support. This implies that the sum over $H^1(K,T^*)$ is secretly a finite sum. In this case, it will suffice to understand the behavior of $\hat{w}(g)$ at individual values of $g$. (We note that this is similar to classical Poisson summation for real-valued functions, where the Fourier transform of a periodic function has discrete support. The extra structure of being periodic with respect to an \emph{open} subgroup is what gives an even smaller support in this case.)
    \item If $w$ is \textbf{not periodic}, then the sum over $H^1(K,T^*)$ is truly an infinite sum. This situation is worth considering, as we can show that it is closely related to other important questions in number theory and arithmetic statistics. While dealing with the infinite sum requires more powerful analytic tools than we will employ in this paper, the behavior of each individual summand $\hat{w}(g)$ will give important insight into certain questions about nonabelian extensions.
\end{enumerate}

We will study several examples of both periodic and non-periodic $w$ functions which are important to open questions in number theory, and in the periodic case we will use these results to prove Theorem \ref{thm:comparisonthm}.

\subsection{Viable local conditions}\label{sec:viable}

Before continuing, we must deal with the circumstance that some local conditions are not satisfiable at all.

\begin{example}\label{ex:GW}
It is known by the Grunwald--Wang theorem that there exist no $C_8$-extensions of $\Q$ such that $2$ is totally inert.

Thus, if $C_8$ carries the trivial Galois action and $\mathcal{L}$ is defined by
\[
L_p = \begin{cases}
\{\text{totally inert coclasses}\} & p=2\\
H^1(\Q_p,C_8) & \text{else},
\end{cases}
\]
Then $H^1_{\mathcal{L}}(\Q,C_8) = \emptyset$.
\end{example}

Wood deals with a similar issue in \cite{wood2009}, which corresponds to the case that $T$ carries the trivial Galois action and the invariant ``$\inv$" is the conductor (or some other fair invariant). In such cases, only the places above $2$ and $\infty$ can cause this type of issue. Wood proves that, as long as at least one extension with the prescribed local conditions exist, then the asymptotic growth rate is as expected.

We make similar definitions to Wood to distinguish these cases:
\begin{definition}
Let $\mathcal{L} = (L_p)$ be a family of subsets $L_p \subseteq H^1(K_p,T)$ and $S$ a finite set of places such that for all $p\not\in S$, $H^1_{ur}(K_p,T) \subseteq L_p$.

We say $\phi = (\phi_p)\in \prod_{p\in S} L_p$ is \textbf{viable} if there exists some $f\in H^1_{\mathcal{L}}(K,T)$ for which $f|_{G_{K_p}} = \phi_p$ for each $p\in S$. Otherwise we say $\phi$ is \textbf{inviable}.

Likewise, we say $\mathcal{L}$ is \textbf{viable} if $H^1_{\mathcal{L}}(K,T) \neq \emptyset$. Otherwise we say $\mathcal{L}$ is \textbf{inviable}.
\end{definition}

From this definition, it is clear that whenever $\mathcal{L}$ is inviable it must be that $H^1_{\mathcal{L}}(K,T) = \emptyset$. Example \ref{ex:GW} is then an example of an inviable family of local conditions.

When $L_p$ is the whole $H^1(K_p, T)$ at almost all places, the search for a global coclass $f$ satisfying the finitely many given local conditions is called \emph{weak approximation} or the \emph{Grunwald problem} in the literature \cite{Wang1950, Neukirch1973, Harari2007, DLAN17, Motte18}. In particular, it is known that there is a finite set $S_0$ of places, depending only on $T$, such that if local conditions are imposed only at a finite set of places disjoint from $S_0$, the Grunwald problem is solvable.

Determining which conditions are viable is tantamount to computing the image of the map
\[
  H^1(K, T) \to \prod_{p \in S} H^1(K_p, T).
\]
By Poitou-Tate duality, this is closely related to the kernel of the map
\begin{equation}\label{eq:loczn_T*}
  H^1(K, T^*) \to \sideset{}{'}\prod_{p \notin S} H^1(K_p, T^*).
\end{equation}
These maps are studied extensively, in various cases, in Neukirch--Schmidt--Wingberg \cite{neukirch-schmidt-wingberg2008}. In the course of proving Theorem \ref{thm:comparisonthm}, we will also prove the following classification of viability:

\begin{theorem}\label{thm:viability}
Let $\mathcal{L} = (L_p)$ be a Frobenian family of local conditions such that for all but finitely many places, $H^1_{ur}(K_p,T)\subseteq L_p$ and $L_p$ is a union of cosets of $H^1_{ur}(K,T)$.

Define the \textbf{viability testing subgroup}
\[
V_{\mathcal{L}} = \{f\in H^1(K,T^*) : f_p \in L_p^{\perp}\text{ for all but finitely many }p\}.
\]
Its orthogonal complement $V_{\mathcal{L}}^\perp \subseteq H^1(\A_K, T)$ has the property that the system $\mathcal{L}=(L_p)$ of local conditions is viable if and only if
\[
  V_{\mathcal{L}}^\perp \intsec \sideset{}{'}\prod_p L_p \neq \emptyset,
\]
where the product is the restricted product with respect to $H^1_{ur}(K_p,T)$.
\end{theorem}

The viability testing subgroup can alternatively be written as a union of dual Selmer sets
\[
V_{\mathcal{L}} = \bigcup_S H^1_{S\mathcal{L}^{\perp}}(K,T^*),
\]
where the union is over all finite sets of places $S$, and $S\mathcal{L}^{\perp}$ is the family of local conditions
\[
\begin{cases}
H^1(K_p,T^*) & p\in S\\
L_p^{\perp} & p\not\in S.
\end{cases}
\]
If $L_p$ is larger, then we generally expect $V_{\mathcal{L}}$ to be smaller. The proof of Theorem \ref{thm:viability} separates naturally into two cases depending on the value of $a_{\inv}(\mathcal{L})$:
\begin{enumerate}
\item Suppose $a_{\inv}(\mathcal{L}) = \infty$, which by \cite[Definition 4.6]{alberts2021} is equivalent to $L_p = H^1_{ur}(K,T)$ for all but finitely many places $p$. This corresponds to the case that $H^1_{\mathcal{L}}(K,T)$ is finite, which follows from the Greenberg-Wiles identity. Then
\begin{align*}
V_{\mathcal{L}} &= \{f\in H^1(K,T^*) : f_p\in H^1_{ur}(K_p,T^*)\text{ for all but finitely many }p\}\\
&=H^1(K,T^*).
\end{align*}
Poitou--Tate duality states that $H^1(K,T^*)$ exactly annihilates $i_T(H^1(K,T))\subset H^1(\A_k,T)$, so it immediately follows that
\[
i_T(H^1(K,T)) = H^1(K,T^*)^{\perp} \cap \sideset{}{'}{\prod_p} L_p.
\]
One side is nonempty if and only if the other is, concluding the proof in this case.
\item Suppose $a_{\inv}(\mathcal{L}) < \infty$. Equivalently, $H^1_{ur}(K,T)\subsetneq L_p$ for a positive proportion of places. This corresponds the the case that $H^1_{\mathcal{L}}(K,T)$ is infinite, and will be proven in the course of proving Theorem \ref{thm:comparisonthm}.
\end{enumerate}

We discuss a few features of viability, but we do not give a full treatment of the structure of $V_{\mathcal{L}}$. Such work would certainly be interesting, but would go well beyond the bounds of this paper.

\subsection{Places that witness inviability}

Viability of local conditions is well-studied in the case that $T$ has the trivial action. In fact, a complete classification of viable local restrictions when $T$ has the trivial action is given in \cite{neukirch-schmidt-wingberg2008,wood2009}.

One of the more useful features of the trivial action case is that inviability is only witnessed by places above $2$ and $\infty$. This helps make viability statements more digestible, and generally speaking number theorists are happy to accept that $2$ and $\infty$ have unique problems not seen by odd primes. However, it turns out that for general Galois modules odd primes \emph{can} be the source of inviable local conditions. See \cite{Wang1950, Neukirch1973, Harari2007, DLAN17, Motte18} for some examples when $L_p = H^1(K_p,T)$ is as large as possible for all but finitely many places.

In the greatest generality, inviability can get particularly bad. There is no guarantee that only finitely many places witness inviability and it is possible for $V_{\mathcal{L}}$ to be an infinite group. For example, in the case that $L_p = H^1_{ur}(K_p,T)$ for all but finitely many $p$ one finds that
\begin{align*}
V_{\mathcal{L}} &= \{f\in H^1(K,T^*) : f\text{ unramified at all but finitely many places}\}\\
&= H^1(K,T^*).
\end{align*}
When $V_{\mathcal{L}}$ is too large, $V_{\mathcal{L}}^{\perp}$ becomes very small and makes the intersection in Theorem \ref{thm:viability} more likely to be empty.

While we do not propose to give a full treatment of viability in this corrigendum, we do consider a case of interest where $V_{\mathcal{L}}$ is necessarily small.

\begin{lemma}\label{lem:V}
Let $T^*$ be a Galois module over $K$ and $\mathcal{L}$ be a family of local conditions such that $L_p$ generates $H^1(K_p,T)$ for all but finitely many places. Then the viability testing subgroup is given by
\[
  V_{\mathcal{L}} = \{f \in H^1(K,T^*) : f_p = 0 \text{ for all but finitely many } p\}.
\]
Moreover, $V_{\mathcal{L}}$ satisfies the following properties:
\begin{enumerate}[(a)]
  \item\label{it:fin} $V_{\mathcal{L}}$ is a finite group,
  \item\label{it:field} $\res^F_{K}(V_{\mathcal{L}}) = \{0\} \subseteq H^1(F, T^*)$, where $F$ is the field of definition of $T^*$ (that is, the smallest field for which $G_F$ acts trivially on $T^*$);
  \item\label{it:S} For each $f\in V_{\mathcal{L}}$ and $p\not\in S_0$, $f_p = 0$, where $S_0$ is the finite set of places at which $T^*$ is ramified.
\end{enumerate}
\end{lemma}

Lemma \ref{lem:V} generalizes the work in \cite{DLAN17} by allowing infinitely many local restrictions. When $T$ has the trivial action, it is known that $S_0$ can be shrunk further to the set of places above $2$ and $\infty$ only. In fact, a complete classification of viable local restrictions when $T$ has the trivial action is given in \cite{neukirch-schmidt-wingberg2008,wood2009}. Unfortunately, this feature is specific to the trivial-action case. The $S_0$ given in Lemma \ref{lem:V}\ref{it:S} cannot be shrunk in general as discussed in \cite{DLAN17}.

In Example \ref{ex:GWoddp} we give examples where inviability is witnessed by any odd prime (i.e., cases for which $S_0$ necessarily contains an odd prime).

\begin{proof}[Proof of Lemma \ref{lem:V}]
Part \ref{it:fin} follows from part \ref{it:field}. Because $V_{\mathcal{L}}\subseteq \ker(\res_K^F)$, we have embedded $V_{\mathcal{L}}$ as a subgroup of $H^1(\Gal(F/K), T^*)$ by the inflation-restriction sequence. Thus, $V_{\mathcal{L}}$ is finite.

We now prove \ref{it:field}. Let $f \in V_{\mathcal{L}}$. By a convenient viewpoint on Galois cohomology (see \cite{odorney2021reflection}, Proposition 4.21), we can view $f$ as a homomorphism
\[
  \sigma_f : G_K \to T^* \rtimes \Aut(T^*)
\]
whose projection on the second factor agrees with the $G_K$-action on $T^*$. That is, $\sigma_f(g) = (f(g),\phi_{T^*}(g))\in T^*\rtimes \Aut(T^*)$. Let $E \supset F$ be the fixed field of the kernel of $\sigma_f$. Note that $E$ is a Galois extension of $K$ containing $F$, which is also Galois, and we have a commutative diagram
\[ 
\begin{tikzcd}
  \Gal(E/K) \rar[hook]{\sigma_f} \dar[two heads] & T^* \rtimes \Aut(T^*) \dar[two heads] \\
  \Gal(F/K) \rar[hook]{\phi_{T^*}} & \Aut(T^*).
\end{tikzcd}
\]
We claim that $E = F$. If not, let $g \in \Gal(E/F) \subseteq \Gal(E/K)$ be a nonidentity element. By the Chebotarev density theorem, there exist infinitely many $p$ such that $\Fr_p(E/K) = g$. But for such $p$, $f_p(\Fr_p)= \sigma_f(\Fr_p) = \Fr_p(E/K) \neq 0 \in T^*$. This means $f_p$ is a nonzero cocycle. Additionally, $\Fr_p(E/K) \in \Gal(E/F)$ implies $\Fr_p(F/K) = 1$. Thus, for an infinite set of these places unramified in $F$, the action of $G_{K_p}$ on $T^*$ is trivial. This implies the set of coboundaries is trivial, so $f_p \neq 0$ as a coclass in $H^1(K_p,T^*)$ for infinitely many places. This contradicts $f \in V_{\mathcal{L}}$, and so proves \ref{it:field}.

Lastly we prove \ref{it:S}. Let $p$ be a place unramified in $F/K$ with $g=\Fr_p(E/K)$. For unramified infinite places, we trivially have $H^1(\Gal(F_\infty/K_\infty),T^*) = 0$, so it suffices to consider finite places. For any other place $\ell$ with $\Fr_\ell(E/K)=\Fr_p(E/K)=g$, the local action of $T^*$is the same at $p$ and $\ell$ so that $H^1(K_p,T^*)\cong H^1(K_\ell,T^*)$. Moreover, $\sigma_f(\Fr_p)$ and $\sigma_f(\Fr_\ell)$ are equal up to conjugation. Thus,
\begin{align*}
f_p(\Fr_p) &= \sigma_f(\Fr_p)\phi_{T^*}(g)^{-1}\\
&= h\sigma_f(\Fr_\ell) h^{-1}\phi_{T^*}(g)^{-1}\\
&= h\sigma_f(\Fr_\ell) h^{-1}\sigma_f(\Fr_\ell) f_\ell(\Fr_\ell).
\end{align*}
This implies $f_p$ and $f_\ell$ are equivalent (up to coboundaries) under the isomorphism of local cohomology groups. By the Chebotarev density theorem, there are infinitely many choices for $\ell$ with $\Fr_\ell(E/K) = g$. Thus, if $f_p$ is not a coboundary, the $f_\ell$ is not a coboundary for infinitely many $\ell$ and it follows that $f\not\in V_{\mathcal{L}}$. Then \ref{it:S} follows from the contrapositive.
\end{proof}

\subsection{Example of inviability at odd places}

A complete classification of viability for general $T$ and $\mathcal{L}$ is outside the scope of this paper. Instead, we provide the following example to demonstrate that inviability can be witnessed by any place when we allow nontrivial actions, which generalizes Example \ref{ex:GW}.

\begin{example}\label{ex:GWoddp}
We will construct an inviable local restriction at an odd prime over $\Q$. The precise example we prove is given below, although with a little more work this construction can be expanded to create a number of other examples.

Let us use the following notation. If $a,b \in \Q^\cross$, denote by $T(a,b)$ the Galois module whose underlying group is $\Z/8\Z$ and such that $\Gal(\bar{\Q}/\Q)$ permutes the four generators $1,3,5,7 \in \Z/8\Z$ in the same manner as the ordered pairs
\[
(\sqrt{a},\sqrt{b}), \quad (-\sqrt{a}, -\sqrt{b}), \quad (\sqrt{a}, -\sqrt{b}), \quad (-\sqrt{a}, \sqrt{b}).
\]
Observe that $T(a,b)$ has field of definition $\Q(\sqrt{a}, \sqrt{b})$. For example, $\mu_8 = T(-1, 2)$, from which we find that the Tate dual of $T(a,b)$ is
\[
\Hom(T(a,b),\mu_8) \cong T(-a, 2b).
\]

\begin{proposition}
Let $\ell$ be an odd prime and consider the Galois module $T = T(-\ell, 2b)$ where $b$ is a quadratic non-residue modulo $\ell$. Let $S_0$ be a finite set of places including $2$, $\ell$, and all places dividing $b$. Then there exists a $\phi\in \prod_{p\in S_0} H^1(\Q_p,T)$ for which the family of local conditions $\mathcal{L}$ on $H^1(\Q,T)$ given by
\[
L_p = \begin{cases}
H^1(K_p,T) & p\not\in S_0\\
\{\phi_p\} & p \in S_0
\end{cases}
\]
is inviable.
\end{proposition}

\begin{proof}
These examples are constructed by finding an element in $V_{\mathcal{L}}$ which is nontrivial at the place $\ell$, which forces $V_{\mathcal{L}}^{\perp}\cap H^1(\Q_\ell,T) \ne H^1(\Q_\ell,T)$. It then suffices to take any $\phi\in H^1(\Q_\ell,T)$ outside of $V_{\mathcal{L}}^{\perp}$.

Take $f\in H^1((\Z/8\Z)^{\times},\Z/8\Z)$ given by the equivalence class of the crossed homomorphism
\[
f(n) = \begin{cases}
0 & n = 1,7\\
4 & n = 3,5.
\end{cases}
\]
We check this is a crossed homomorphism. Notice that since $\im(f) = 4\Z/8\Z$, we have
\[
(n-1) f(m) = 0
\]
for all $n,m\in (\Z/8\Z)^{\times}$, which rearranging implies that
\[
nf(m) = f(m).
\]
Thus it suffices to prove that $f : (\Z/8\Z)^{\times} \to 4\Z/8\Z$ is a homomorphism, which is evident since $\{1,7\} \subset (\Z/8\Z)^\times$ is a subgroup of index $2$.

The cocycle $f$ has the following nice properties:
\begin{itemize}
\item[(i)] $f$ is not a coboundary. This is because all coboundaries are of the form $n\mapsto na - a = (n-1)a$. However, the equations $(7-1)a = 0$ and $(3-1)a = 4$ are not simultaneously solvable modulo $8$.
\item[(ii)] The image of $f$ under restriction to any cyclic subgroup \emph{is} a coboundary. This is trivial for $f|_{\langle 1\rangle} = 0$ and $f|_{\langle 7\rangle} = 0$. Meanwhile,
\begin{align*}
f|_{\langle 3\rangle}(m) &= \begin{cases}
0 & m=1\\
4 & m=3
\end{cases}\\
&= 2m - 2
\end{align*}
and
\begin{align*}
f|_{\langle 5\rangle}(m) &= \begin{cases}
0 & m=1\\
4 & m=5
\end{cases}\\
&= 1m - 1.
\end{align*}
\end{itemize}
We now inflate $f$ along the Galois action on $T^* = T(\ell, b)$ to an element $\inf(f)\in H^1(\Q,T^*)$, which we claim is a nontrivial element of the viability testing subgroup $V_{\mathcal{L}}\le H^1(\Q,T^*)$. Consider that any place $p$ unramified in the field $F = \Q(\sqrt{\ell}, \sqrt{b})$ of definition of $T^*$, as well as any infinite place $p$, has cyclic decomposition group in $F$. Let $C < (\Z/8\Z)^{\times}$ be the cyclic image of $D_p$ under the isomorphism with $\Gal(F/\Q)$. We have $\size{C} = 1$ or $2$. Highlighting the fact that the action of $D_p$ on $T^*$ factors through the quotient $G_\Q \twoheadrightarrow (\Z/8\Z)^\cross$ as the quotient $D_p \twoheadrightarrow C$, we produce a commutative diagram
\[
\begin{tikzcd}
H^1((\Z/8\Z)^{\times},T^*)\dar{\res} \rar{\inf} & H^1(\Q,T^*)\dar{\res_{D_p}}\\
H^1(C,T^*) \rar{\inf} &H^1(D_p,T^*).
\end{tikzcd}
\]
By a diagram chase, we find that $\res(f) = 0$ implies that $\res_{D_p}\inf(f) = 0$ at all places unramified in $F/\Q$. This proves that $\inf(f)\in V_{\mathcal{L}}$.

Meanwhile, by construction, $\ell$ is ramified in $F/\Q$ with $D_\ell = \Gal(F/\Q) \cong (\Z/8\Z)^\cross$, so $\res_{D_\ell}\inf(f) \neq 0$ because $f$ is not a coboundary. Thus $\inf(f)$ is nonzero at at least one place, namely $\ell$; in particular, $f \in V_{\mathcal{L}}\setminus \{0\}$. Let $S_0$ be the finite set of places at which $\inf(f) \neq 0$ (taking $2$, $\ell$, and all primes dividing $b$ is sufficient). Take
\begin{align*}
\phi \in \prod_{p\in S_0} H^1(\Q_p,T)
\end{align*}
that is not orthogonal to $\res_{S_0}(\inf f)$. Then no global coclass $g \in H^1(\Q, T)$ can reduce to $\phi$ at every place in $S_0$, or it would have a globally nontrivial pairing with $\inf f$.
\end{proof}
\end{example}

\section{Multiplicative \texorpdfstring{$w$}{w}}

The examples we will consider come from multiplicative functions:

\begin{definition}
We call a function $w:H^1(\A_K,T)\rightarrow \C$ \textbf{multiplicative} if
\[
w(f) = \prod_p w(f|_{G_{K_p}})\,.
\]
\end{definition}

The discriminant is multiplicative, so the function $w(f)=\disc(f)^{-s}$ is necessarily multiplicative (as is the corresponding example for any admissible ordering $\inv$ as defined in \cite[Definition 4.4]{alberts2019}). Multiplicativity is preserved by the Fourier transform, so that the identification of $H^1(K_p,T)^{\vee}$ with $H^1(K_p,T^*)$ by Tate duality implies
\begin{align*}
\hat{w}(g) &= \prod_p\hat{w}(g|_{G_{K_p}})\\
&= \prod_p \left(\frac{1}{\size{H^0(K_p,T)}}\sum_{f_p\in H^1(K_p,T)}\langle f_p,g|_{G_{K_p}}\rangle w(f_p)\right)
\end{align*}
where $\langle\cdot,\cdot\rangle$ is the Tate pairing and we recall that each point in $H^1(K_p,T)$ was assigned measure $1/|H^0(K_p,T)|$. Note that if $g=0$ is the identity, this is reminiscent of the local series given by Bhargava in the Malle-Bhargava principle \cite{bhargava2010,alberts2019}. Indeed, if $w(f)=\disc(f)^{-s}$ then it follows that
\[
\hat{w}(0) = \prod_p\left(\frac{1}{\size{H^0(K_p,T)}}\sum_{f_p\in H^1(K_p,T)}\disc(f_p)^{-s}\right)
\]
is exactly the local series given by Bhargava (in the case of the trivial action) and the first author (in the case of nontrivial actions). This suggests a reformulation of the Malle-Bhargava principle in terms of Fourier transforms:

\begin{heuristic}[Malle-Bhargava principle on $H^1(K,T)$]\label{heu:MB}
Suppose $w:H^1(\A_K,T)\rightarrow \C$ is a ``reasonable" multiplicative function satisfying the hypotheses of Poisson summation (Theorem \ref{thm:Poisson-0}). Then there exists a positive constant $c>0$ such that
\[
\sum_{f\in H^1(K,T)} w(f) = c\cdot \hat{w}(0) + \text{``lower-order term".}
\]
\end{heuristic}

Following in the footsteps of Cohen-Lenstra \cite{cohen-lenstra1984}, we do not give a definition for a ``reasonable" function; instead we will consider a series of examples in this paper that should be considered ``reasonable". The notion of a ``lower-order term" depends on the context as well, and we don't give an explicit definition in all cases. In the example $w(f)=\disc(f)^{-s}$, $\hat{w}(0)$ is a Dirichlet series whose rightmost pole is at $s=1/a(T)$ of order $b(K,T)$ \cite[Theorem 3.3]{alberts2019}. We then take ``lower-order term" to mean a Dirichlet series whose rightmost pole is at $s \in \{z\in \C : {\rm Re}(z)<1/a(T)\} \cup \{1/a(T)\}$ and if it is at $s=1/a(T)$ then it is of an order $<b(K,T)$. By applying a Tauberian theorem, we see that this expression identifies the sum of coefficients of $\hat{w}(0)$ as the main term of
\[
\sum_{\disc(f)<X} 1,
\]
while the ``lower-order term" converts into an error of a smaller magnitude.

\textbf{Remark:} In the discriminant ordering (or other similar orderings), there are certain cases where the main term really is a constant multiple of $\hat{w}(0)$. This will happen when there exist other $g\in H^1(K,T^*)$ with $\hat{w}(g)$ is on the same ``order of magnitude" as $\hat{w}(0)$, and so must also contribute to the main term.

One easily shows that $|\hat{w}(g)|\le |\hat{w}(0)|$ for all $g\in H^1(K,T^*)$ and all $w$ for which the Fourier transform exists, which suggests that $\hat{w}(0)$ should attain the correct order of magnitude, if not the correct multiplicity. (Note: there is certainly something to prove here, as we have done nothing to show that the infinite sum of $\hat{w}(g)$ does not attain a larger order of magnitude than the individual summands. This is why we merely refer to this statement as a ``heuristic".)

\section{Periodic \texorpdfstring{$w$}{w}}

Suppose $w$ is periodic with respect to a compact open subgroup $Y\le H^1(\A_K,T)$ and satisfies the hypotheses of Theorem \ref{thm:Poisson}. Then $\hat{w}$ is supported on the compact open subgroup $Y^{\perp}$, and
\[
\sum_{g\in H^1(K,T^*)}\hat{w}(g) = \sum_{g\in H^1(K,T^*)\cap Y^{\perp}}\hat{w}(g)
\]
is a sum over the group $H^1(K,T^*)\cap Y^{\perp}$. Discreteness of $H^1(K,T^*)$ and compactness of $Y^{\perp}$ imply that $H^1(K,T^*)\cap Y^{\perp}$ is finite, which makes the summation easier to evaluate.

An important example of a periodic function is the absolute discriminant when $T$ has the trivial action. If
\[
Y=\prod_{p\nmid \infty} \Hom_{ur}(\Q_p,T),
\]
then $|\disc(f+y)| = |\disc(f)|$ for every $y\in Y$, as the discriminant only sees ramification behavior. $Y\le H^1(\A_{\Q},T)$ is necessarily a compact open subgroup in the adelic topology. The function $w(f)=|\disc(f)|^{-s}$ is shown to satisfy the hypotheses of Theorem \ref{thm:Poisson} by \cite{frei-loughran-newton2018}, which they then use to compute the location and orders of poles of the corresponding Dirichlet series.

Our main example of a periodic function is a generalization of the absolute discriminant to the largest class of functions for which the same ideas will work. This follows the ideas in \cite{alberts2019} by allowing nontrivial actions on $T$, allowing restricted local conditions $\mathcal{L}=(L_p)$, and allowing an admissible ordering to be used instead of the usual discriminant:

\begin{proposition}\label{prop:wperiodic}
Let $K$ be a number field, $S$ a finite set of places of $K$, and $T$ a Galois module. If
\begin{enumerate}[(a)]
\item{
$\mathcal{L}=(L_p)$ is a family of subsets $L_p\subset H^1(K_p,T)$ such that for all $p\not\in S$, $L_p$ is closed under translation by $H^1_{ur}(K_p,T)$ and $0\in L_p$;
}
\item{
$\inv$ is an admissible ordering as defined in \cite[Definition 4.4]{alberts2019},
}
\end{enumerate}
then the function $w:H^1(\A_K,T)\rightarrow \C$ defined by
\[
w(f) = \begin{cases}
\mathcal{N}_{K/\Q}(\inv(f))^{-s} & f\in \prod_p L_p\\
0 & f\not\in \prod_p L_p
\end{cases}
\]
satisfies the hypotheses of Theorem \ref{thm:Poisson} and is periodic with respect to the compact open subgroup
\[
Y = Y_S = \prod_{p\not\in S} H^1_{ur}(K_p,T).
\]
\end{proposition}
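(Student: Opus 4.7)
The plan is to verify two things: that $w$ is periodic with respect to $Y$, and that it satisfies the three hypotheses of Theorem \ref{thm:Poisson-0} for $G = H^1(\A_K, T)$ and $G_0 = H^1(K, T)$. For periodicity, take $y \in Y$, so that $y_p = 0$ for $p \in S$ and $y_p \in H^1_{ur}(K_p,T)$ for $p \notin S$. The set $\prod_p L_p$ is stable under translation by $Y$: at $p \in S$ trivially, and at $p \notin S$ because $L_p + H^1_{ur}(K_p, T) \subseteq L_p$ by hypothesis (a). On this support, $\inv(f+y) = \inv(f)$: each $y_p$ is unramified, so $(f+y)|_{I_p} = f|_{I_p}$, and admissibility of $\inv$ ensures (after enlarging $S$ if necessary to absorb the finitely many exceptional places of $\inv$) that the local factor $\nu_p(\inv(\cdot))$ at $p \notin S$ is invariant under translation by $H^1_{ur}(K_p,T)$.

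For the Poisson hypotheses, first observe that $w$ is locally constant and hence continuous: $\prod_p L_p$ is clopen in $H^1(\A_K,T)$---each $L_p$ is clopen in the finite group $H^1(K_p,T)$ and is a union of $H^1_{ur}$-cosets at $p \notin S$---and $\mathcal{N}_{K/\Q}(\inv(f))^{-s}$ is locally constant for the same reason. For hypothesis (a), the sum $w'(x) = \sum_{f \in H^1(K,T)} w(x+f)$ is, absolutely, dominated by a Dirichlet series indexed by elements of a coset of $i_T(H^1(K,T))$ inside $\prod_p L_p$. For $\mathrm{Re}(s)$ sufficiently large this converges, using the standard polynomial upper bounds on the number of coclasses with bounded invariant (as in \cite{alberts2019}; such bounds transfer from the origin to arbitrary cosets since the invariant is local).

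Hypotheses (b) and (c) are then essentially formal. Since $w$ is periodic under $Y$, so is $w'$, and hence $w'$ descends to the quotient $H^1(\A_K,T)/(H^1(K,T) + Y)$. This quotient is simultaneously discrete (as $Y$ is open) and compact (as $H^1(K,T)$ has cocompact image in $H^1(\A_K,T)$ by the Poitou-Tate nine-term sequence), hence \emph{finite}. Any function on a finite group is $L^1$, giving (b); and $\hat{w'}$ is then supported on the finite annihilator of $(H^1(K,T)+Y)/H^1(K,T)$ in $(H^1(\A_K,T)/H^1(K,T))^\vee$, giving (c). The main obstacle is therefore the convergence estimate in (a); everything else follows formally from periodicity plus the finiteness of $H^1(\A_K,T)/(H^1(K,T)+Y)$.
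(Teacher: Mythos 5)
Your proof is correct, but it takes a genuinely different route from the paper's at both analytic steps, so a comparison is worth recording. The paper first proves that $w$ itself is $L^1$ on $G=H^1(\A_K,T)$ for $\Re(s)$ large via an explicit Euler-product bound $\prod_p\left(1+O(\mathcal{N}_{K/\Q}(p)^{-a(w)\Re(s)})\right)$; hypothesis (a) then follows for every $x$ from a short Fubini computation using only $Y$-periodicity, namely $\int_{G_0}|w(x+z)|\,dz\le\frac{\#(G_0\cap Y)}{\mu(Y)}\int_G|w|$, and (b) from $\int_{G/G_0}|w'|\le\int_G|w|$. You instead prove (a) by summing a Dirichlet series over the discrete group $H^1(K,T)$ and importing polynomial upper bounds on the number of coclasses of bounded invariant from \cite{alberts2019}; this is non-circular (such upper bounds are established there independently of the asymptotic) but less self-contained, and the transfer of the bound to the coset of $x$ deserves one more sentence: only finitely many components of $x$ are ramified, and at the remaining places admissibility forces $\nu_p(\inv(x_p+f_p))$ to vanish exactly when $\nu_p(\inv(f_p))$ does, with uniformly bounded values, which is what lets the counting bound move with $x$. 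Note also that Theorem \ref{thm:Poisson-0} requires $w$ itself to be $L^1$ on $G$ (the transform $\hat w$ appearing in the conclusion of Theorem \ref{thm:Poisson} is only defined for integrable $w$), and you never verify this; it is recoverable from what you have, since the periodization of $|w|$ is finite pointwise by your (a), factors through the finite group $G/(G_0+Y)$, and $G/G_0$ is compact, so that $\int_G|w|$ equals the integral of that periodization over $G/G_0$ and is finite --- but it should be stated. On the other hand, your handling of (b) and (c) via the finiteness of $H^1(\A_K,T)/(H^1(K,T)+Y)$ is cleaner than the paper's and, for (c), more robust: the dual of the compact group $G/G_0$ is discrete, where boundedness of $\widehat{w'}$ alone would not yield $L^1$, whereas your observation that $\widehat{w'}$ is supported on the finite dual of $G/(G_0+Y)$ closes the step correctly and also makes visible why the sum over $H^1(K,T^*)$ in the application is finite.
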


\begin{proof}
$w$ is periodic with respect to $Y$ by definition, so it suffices to check the hypotheses of Poisson summation.

We first prove that $w$ is $L^1$ on $H^1(\A_K,T)$ for $\Re(s)$ sufficiently large. Define
\[
a(w) = \liminf_{\mathcal{N}_{K/\Q}p\to\infty} \min_{\substack{f\in H^1(K_p,T)\\ \nu_p(\inv(f))\ne 0}} \nu_p(\inv(f)).
\]
Then
\begin{align*}
    \int_{H^1(\A_K,T)} |w(f)| \ df &\ll \prod_p \left(1 + O(\mathcal{N}_{K/\Q}(p)^{-a(w)\Re(s)})\right)\,,
\end{align*}
which converges absolutely for $\Re(s)>1/a(w)$. We use this to prove each of the hypotheses of Poisson summation (Theorem \ref{thm:Poisson-0}) with $G=H^1(\A_K,T)$ and $G_0=\im(\iota_T)$:

\begin{enumerate}[(a)]
    \item{
    Since $w$ is $Y$-periodic, we have
    \[
      |w(x)| = \frac{1}{\mu(Y)}\int_{Y} |w(x + y)|\ dy,
    \]
    where, since $Y$ is open, the Haar measures on $Y$ and $G$ can be taken to coincide. Hence
    \begin{align*}
        \int_{G_0} |w(x+z)|\ dz &= \frac{1}{\mu(Y)} \int_{G_0} \int_{Y} |w(x + y + z)|\ dy\ dz \\
        &= \frac{\size{G_0 \intsec Y}}{\mu(Y)} \int_{Y + G_0} |w(x + z)|\ dz \\
        &\leq \frac{\size{G_0 \intsec Y}}{\mu(Y)} \int_{G} |w(z)| \ dz < \infty,
    \end{align*}
    where we used that, since $Y$ is compact and $G_0$ is discrete, their intersection is finite. Thus the integral defining $w'$ is absolutely convergent.
    }
    
    \item{
    We note that
    \begin{align*}
        \int_{G/G_0} |w'(x)| \ dx &= \int_{G/G_0} \left\lvert\int_{G_0} w(x+y)\ dy\right\rvert \ dx\\
        &\le \int_{G/G_0} \int_{G_0} |w(x+y)|\ dy\ dx\\
        &= \int_{G} |w(x)|\ dx\,,
    \end{align*}
    which converges because $w$ is $L^1$ for $\Re(s)>1/a(w)$, thus $w'$ is $L^1$ for $\Re(s)>1/a(w)$.
    }
    
    \item
    Since $w'$ is $L^1$, its Fourier transform $\widehat{w'}$ is bounded and hence $L^1$ on the compact space $G^{\vee}/G_0^{\perp}$. \qedhere
\end{enumerate}
\end{proof}

This is almost precisely the setup for Theorem \ref{thm:comparisonthm}, where the only condition missing is that $\mathcal{L}$ and $\inv$ are Frobenian. Indeed, Proposition \ref{prop:wperiodic} implies that
\[
\sum_{f\in H^1(K,T)} w(f)
\]
is a Dirichlet series which can be expressed as a finite sum of Euler products. Knowing that $\mathcal{L}$ and $\inv$ are Frobenian implies that the Euler products are of Frobenian factors, which is sufficient for calculating the locations of their poles as is done in \cite[Proposition 2.2]{alberts2019}. A Tauberian theorem is then used to convert the analytic information at the poles into asymptotic information. Additional steps are needed to address viability considerations. We refer to \cite{alberts2019} for some details that are unchanged.

We now begin the proof of Theorem \ref{thm:comparisonthm}. Let $w$ be as in Proposition \ref{prop:wperiodic}. Then
\[
\sum_{f\in H^1(K,T)} w(f) = \frac{|H^0(K,T)|}{|H^0(K,T^*)|} \sum_{g\in H^1(K,T^*)} \hat{w}(g)
\]
is a Dirichlet series. $w$ is periodic with respect to $Y$, which implies $\hat{w}$ has finite support in $H^1(K,T^*)$. The rate of growth of $|H^1_{\mathcal{L}}(K,T;X)|$ is determined by the location and order of the rightmost pole of this series via a Tauberian theorem. It then suffices to show that the rightmost pole occurs at $1/a_{\inv}(\mathcal{L})$ of order $b_{\inv}(\mathcal{L})$.

First consider
\begin{align*}
    \hat{w}(0) = \prod_p \left(\frac{1}{|H^0(K_p,T)|}\sum_{f\in L_p}\mathcal{N}_{K/\Q}(\inv(f))^{-s} \right)\,.
\end{align*}
This is precisely the Frobenian Euler product studied in \cite{alberts2019}, denoted $Q(0,s)$ in Propositions 4.14 and 4.16 of that paper, which was shown to have rightmost pole at $1/a_{\inv}(\mathcal{L})$ of order $b_{\inv}(\mathcal{L})$, that is, a leading term $c(s - 1/a_{\inv}(\mathcal{L})^{-b_{\inv}(\mathcal{L})}$.

For each other term, note that
\begin{align*}
    |\hat{w}(g)| &= \left\lvert\prod_p \left(\frac{1}{|H^0(K_p,T)|}\sum_{f\in L_p}\langle f,g|_{G_{K_p}}\rangle\mathcal{N}_{K/\Q}(\inv(f))^{-s} \right)\right\rvert\\
    &\le \prod_p \left(\frac{1}{|H^0(K_p,T)|}\sum_{f\in L_p}\mathcal{N}_{K/\Q}(\inv(f))^{-\Re(s)} \right)\,,
\end{align*}
which agrees with $\hat{w}(0)$ at $\Re(s)$. Taking a limit as $s\to 1/a_{\inv}(\mathcal{L})$ implies that if $\hat{w}(g)$ has a pole (or a ``fractional pole", i.e.~a branch singularity like $(s-1/a_{\inv}(\mathcal{L}))^{-5/2}$ for example) at $s=1/a_{\inv}(\mathcal{L})$, it is of order at most $b_{\inv}(\mathcal{L})$. Thus, after adding the finitely many other terms $\hat{w}(g)$ to $\hat{w}(0)$, it suffices to show that this rightmost pole of $\hat{w}(0)$ does not cancel out. The remainder of this section will be devoted to proving that the pole does not cancel out, which we accomplish in three steps:

\begin{enumerate}
\item\label{step:part} Partitioning the sum of $\hat{w}(g)$ by cosets of the viability testing subgroup. This step is new, and will prove the ``only if" direction of Theorem \ref{thm:viability}.
\item\label{step:prod} Proving Theorem \ref{thm:correctedmain} in the case that $\inv$ is given by the product of ramified primes outside $S$ and the minimal-index coclasses generate $T$. The ``if" direction of Theorem \ref{thm:viability} follows from these cases.
\item\label{step:remain} Proving the remaining cases of Theorem \ref{thm:correctedmain} by comparing a general admissible invariant to the product of the ramified primes.
\end{enumerate}

\textbf{Remark:} It is likely that step \ref{step:prod} can be done for any ``fair" counting function, with an appropriate analog of the notion of a fair counting function defined in \cite{wood2009}. This step produces a leading term given by a single Euler product, and some notion of fairness will likely preserve this property. Meanwhile, as demonstrated for the discriminant ordering in \cite{wood2009}, some orderings do not produce a single Euler product for the leading constant and necessarily require the work in step \ref{step:remain}.

\subsection{Partitioning by cosets of the viability testing subgroup}
We know that $w$ is periodic with respect to $Y_S$ in the setting of Theorem \ref{thm:correctedmain}, which means $\hat{w}$ is supported on the compact subgroup $Y_S^{\perp}$.  We concluded the sum of $\hat{w}(g)$ is finite because $Y_S^{\perp} \cap H^1(K,T^*)$ is finite. We partition the sum
\[
\sum_{g\in H^1(K,T^*)}\hat{w}(g) = \sum_{g\in R}\left(\sum_{v\in Y_S^{\perp}\cap V_{\mathcal{L}}}\hat{w}(g+v)\right),
\]
where $R$ is a set of representatives for the cosets $(Y_S^{\perp} \cap H^1(K,T^*)) / (Y_S^{\perp} \cap V_{\mathcal{L}})$.

For each $g\in R$ we evaluate the sum of Fourier transforms
\begin{align*}
\lefteqn{\sum_{v\in Y_S^{\perp}\cap V_{\mathcal{L}}}\hat{w}(g+v)} \\
&= \sum_{v\in Y_S^{\perp}\cap V_{\mathcal{L}}}\prod_{p} \left(\frac{1}{|H^0(K_p,T)|}\sum_{f\in L_p}\langle f,(g+v)|_{G_{K_p}}\rangle\mathcal{N}_{K/\Q}(\inv(f))^{-s} \right).
\end{align*}
Each element $v\in V_{\mathcal{L}}$ has $v|_{G_{K_p}}\in L_p^{\perp}$ for all but finitely many places, so for each of those places $\langle f, (g+v)|_{G_{K_p}}\rangle = \langle f, g|_{G_{K_p}}\rangle$ is independent of $v$. Take $\widetilde{S}$ to be the set of places defined by
\[
\widetilde{S} = S\cup \bigcup_{v\in Y_S^{\perp}\cap V_{\mathcal{L}}}\{p : v|_{G_{K_p}} \not\in L_p^{\perp}\},
\]
so that $\widetilde{S}$ is precisely the set of places for which the Euler factor depends on $v$. We factor these places out so that
\begin{align*}
\lefteqn{\sum_{v\in Y_S^{\perp}\cap V_{\mathcal{L}}}\hat{w}(g+v)} \\
&= \sum_{v\in Y_S^{\perp} \cap V_{\mathcal{L}}}\prod_{p\in \widetilde{S}} \left(\frac{1}{|H^0(K_p,T)|}\sum_{f\in L_p}\langle f,(g+v)|_{G_{K_p}}\rangle\mathcal{N}_{K/\Q}(\inv(f))^{-s}\right)\times \\
&\quad {}\times\prod_{p\not\in \widetilde{S}} \left(\frac{1}{|H^0(K_p,T)|}\sum_{f\in L_p}\langle f,g|_{G_{K_p}}\rangle\mathcal{N}_{K/\Q}(\inv(f))^{-s}\right).
\end{align*}
For simplicity, we write the second Euler product as
\[
\frac{1}{\prod_{p\in \widetilde{S}} \hat{w}_p(g)}\hat{w}(g).
\]
Next, we simplify the first Euler product. The number of places contributed to $\widetilde{S}$ by each individual $v$ is finite since $v\in V_{\mathcal{L}}$. Meanwhile, the set of exceptional places $S$ is finite by definition, and
\[
Y_S^\perp\cap V_{\mathcal{L}}\subset Y_S^{\perp} \cap H^1(K,T^*)
\]
is finite by $Y_S^{\perp}$ compact and $H^1(K,T^*)$ discrete. Thus $\widetilde{S}$ is a finite set.

Let $\res_{\widetilde{S}} : H^1(K, T^*) \to \prod_{p\in \widetilde{S}} H^1(K_p, T^*)$ be the product of restriction maps to places $p\in \widetilde{S}$. We multiply the Euler factors over $\widetilde{S}$ together to get
\begin{align*}
\left(\prod_{p\in \widetilde{S}} \frac{1}{|H^0(K_p,T)|}\right)\sum_{f\in \prod_{p\in \widetilde{S}} L_p}\langle f,\res_{\widetilde{S}}(g+v)\rangle \mathcal{N}_{K/\Q}(\inv(f))^{-s},
\end{align*}
Moving the summation over $v$ all the way to the inside gives an inner sum
\begin{align*}
\sum_{v\in Y_S^{\perp}\cap V_{\mathcal{L}}}\langle f, \res_{\widetilde{S}}(g)+\res_{\widetilde{S}}(v)\rangle &= \langle f, \res_{\widetilde{S}}(g)\rangle \sum_{v\in Y_S^{\perp}\cap V_{\mathcal{L}}}\langle f,\res_{\widetilde{S}}(v)\rangle\\
&=\begin{cases}
\langle f, \res_{\widetilde{S}}(g)\rangle|Y_S^{\perp}\cap V_{\mathcal{L}}| & f\in \res_{\widetilde{S}}(Y_S + V_{\mathcal{L}}^{\perp})\\
0 & \text{else}.
\end{cases}
\end{align*}
\textbf{Remark:} Here we see that the sum over $v$ of the Euler factors at $\widetilde{S}$ plays the same role as the sum over $\mathcal{E}(C)$ in \cite{wood2009}. Whether this sum is zero or not detects inviability.

Putting it all together, we conclude that
\begin{align*}
\sum_{v\in V_{\mathcal{L}}}\hat{w}(g+v) &= |Y_S^{\perp}\cap V_{\mathcal{L}}|\left(\sum_{f\in \res_{\widetilde{S}}(Y_S + V_{\mathcal{L}}^{\perp}) \cap \prod_{p\in \widetilde{S}} L_p} \langle f, g\rangle \mathcal{N}_{K/\Q}(\inv(f))^{-s}\right)\\
&\quad {} \times\left(\prod_{p\in\widetilde{S}} \frac{1}{\hat{w}_p(g)|H^0(K_p,T)|}\right)\hat{w}(g).
\end{align*}
Consider that
\[
\res_{\widetilde{S}}(Y_S+V_{\mathcal{L}}^\perp)\cap \prod_{p\in \widetilde{S}}L_p = \emptyset
\]
if and only if
\[
(Y_S+V_{\mathcal{L}}^\perp)\cap \prod_{p}L_p= \emptyset
\]
by pulling back along $\res_{\widetilde{S}}$. By assumption, $\prod_p L_p$ is closed under translation by $Y_S$, so this set is nonempty if and only if
\[
V_{\mathcal{L}}^{\perp} \cap \prod_p L_p = \emptyset.
\]
As this fact is independent of the representative $g$, emptiness of this set implies $\mathcal{L}$ is inviable and proves the ``only if'' direction of Theorem \ref{thm:viability}.

\subsection{Ordering by the product of the ramified primes}
We consider the specific ordering
\[
\ram_S(f) = \prod_{\substack{p\not\in S\\f|_{I_p}\ne 1}} \Nm_{K/\Q} (p).
\]
This is an example of a ``fair" counting function given in \cite{wood2009}, and we can prove Theorem \ref{thm:correctedmain} for this ordering with similar strength to Wood's results. If $a_{\ram_S}(\mathcal{L}) = \infty$, Theorem \ref{thm:correctedmain} follows from bounding the counting function by a finite Selmer group in the Greenberg-Wiles identity. Otherwise, by definition $a_{\ram_S}(\mathcal{L}) = 1$ as primes can only occur in $\ram_S$ with exponent $1$.

Suppose $g\in Y_S^{\perp} \setminus V_{\mathcal{L}}$, so that $g|_{G_{K_p}}\not\in L_p^{\perp}$ for infinitely many places and $g$ is unramified at all places $p\not\in S$. The Chebotarev density theorem and $\mathcal{L}$ being Frobenian implies that $g|_{G_{K_p}}\not\in L_p^{\perp}$ for a positive proportion of places. This implies that, for a positive proportion of places $p$, there exists an $f\in L_p$ such that $\langle f, \res_p(g)\rangle \ne 1$. However, the fact that $g\in H^1_{ur}(K_p,T^*)$ for these places implies that $f\not\in H^1_{ur}(K_p,T)$ as these subgroups annihilate each other. Because all ramified coclasses have weight $1=a_{\ram_S}(\mathcal{L})$ under the ordering $\ram_S$, we have proven the following:

\begin{lemma}
  For a positive proportion of places, there exists an $f\in L_p^{[1]}$ such that $\langle f, \res_p(g)\rangle \ne 1$ (where, as in \cite{alberts2021}, we set $L_p^{[m]} = \{f_p\in L_p : \nu_p(\inv(f_p))= m\}$). Using this fact, we will prove that the order of the singularity of $\hat{w}(g)$ is strictly less than for $\hat{w}(0)$.
\end{lemma}

The order of the singularity is given by the $b$-invariant, which is the average value of coefficients of $|p|^{-a_{\ram_S}(\mathcal{L})s} = |p|^{-s}$ for $|p|\le x$ as $x\to \infty$ (see \cite[Definition 4.6]{alberts2021}). This coefficient for $\hat{w}(g)$ is given by
\[
\sum_{f\in L_p^{[1]}}\langle f,\res_p(g)\rangle |p|^{-s} \le \sum_{f\in L_p^{[1]}}|p|^{-s}.
\]
By our choice of $g$, this inequality is necessarily strict for a positive proportion of places. Taking the average of both sides over $|p|\le x$ as $x\to \infty$, this implies
\[
\substack{\displaystyle\text{order of singularity of}\\\displaystyle\hat{w}(g)\text{ at }s=1/a_{\ram_S}(\mathcal{L})} < b_{\ram_S}(K,T) = \substack{\displaystyle\text{order of singularity of}\\\displaystyle\hat{w}(0)\text{ at }s=1/a_{\ram_S}(\mathcal{L}).}
\]
In particular, this implies that for any $g\in Y_S^{\perp}\setminus V_{\mathcal{L}}$, the singularity of $\hat{w}(g)$ does not cancel with the singularity of $\hat{w}(0)$. Since $\hat{w}$ has finite support in $Y_S^{\perp}$, the sum over all $g\not\in V_{\mathcal{L}}$ cannot cancel with the singularity of $\hat{w}(0)$ either. Thus, the order of the singularity at $s=1/a_{\ram_S}(\mathcal{L})$ is determined by those $g$ in the identity coset $V_{\mathcal{L}}$, which is given by
\begin{align*}
\sum_{v\in Y_S^{\perp} \cap V_{\mathcal{L}}}\hat{w}(v) & = |Y_S^{\perp} \cap V_{\mathcal{L}}|\left(\sum_{f\in \res_{\widetilde{S}}(Y_S + V_{\mathcal{L}}^{\perp}) \cap \prod_{p\in \widetilde{S}} L_p} \mathcal{N}_{K/\Q}(\inv(f))^{-s}\right)\\
&\quad \times\left(\prod_{p\in \widetilde{S}} \frac{1}{\hat{w}_p(0)|H^0(K_p,T)|}\right)\hat{w}(0).
\end{align*}
All factors are positive at $s=1/a_{\ram_S}(\mathcal{L}) = 1$, so the order of this singularity agrees with that of $\hat{w}(0)$.

Applying a Tauberian theorem, this proves the asymptotic in Theorem \ref{thm:correctedmain} as long as
\begin{itemize}
\item[(a)] $\inv = \ram_S$, and
\item[(b)] $V_{\mathcal{L}}^{\perp}\cap \prod L_p \ne \emptyset$.
\end{itemize}
The nonzero asymptotic in particular shows that (b) implies viability, proving the remaining direction of Theorem \ref{thm:viability}.

\subsection{Extending to the general case}

The final step for proving Theorem \ref{thm:correctedmain} is the case that
\begin{itemize}
\item $\inv \ne \ram_S$, and
\item $\mathcal{L}$ is viable (or equivalently $V_{\mathcal{L}}^{\perp} \cap \prod_{p\in S} L_p \ne \emptyset$, as we have now proven Theorem \ref{thm:viability}).
\end{itemize}
This step proceeds similarly to the original proof, where we produce a lower bound with the desired order of magnitude in order to prove that the main term cannot be cancelled out.

We are assuming that $\mathcal{L}$ is viable, so we choose some $f_0\in H^1_{\mathcal{L}}(K,T)$. Define $\mathcal{L}(f_0) = (L(f_0)_p)$ to be the family of local conditions given by
\[
L(f_0)_p = \begin{cases}
\{0\} & p\in S\\
L_p - f_0|_{G_{K_p}} & p\not\in S.
\end{cases}
\]
For $p\not\in S$, $L_p$ is a union of cosets of $H^1_{ur}(K_p,T)$. This property is preserved in the construction of $L(f_0)_p$, and $f_0\in H^1_{\mathcal{L}}(K_p,T)$ implies $f_0|_{G_{K_p}}\in L_p$ so that $0\in L(f_0)_p$. Thus the corresponding $w$ function is still $Y_S$-periodic. $\mathcal{L}(f_0)$ is certainly viable, as $0\in L(f_0)_p$ for all places $p$.

The map $x\mapsto x+f_0$ gives an injection
\[
H^1_{\mathcal{L}(f_0)}(K,T) \hookrightarrow H^1_{\mathcal{L}}(K,T).
\]
This map preserves the $p$-part of the invariant, $\inv$, at every place unramified in $f_0$. As a result, the invariant can change by at most a bounded amount. This implies that there exists some constant $C_1 > 0$ depending only on $f_0$ such that
\[
H^1_{\mathcal{L}}(K,T;X) \ge H^1_{\mathcal{L}(f_0)}(K,T;C_1X).
\]
Thus it suffices to prove $H^1_{\mathcal{L}(f_0)}(K,T;X)$ has the expected order of magnitude.

Define the subfamily $\mathcal{L}(f_0)^{\rm min} = (L(f_0)_p^{\rm min})$ of $\mathcal{L}(f_0)$ to be the family of local conditions
\[
L(f_0)_p^{\rm min} = \begin{cases}
L(f_0)_p & p\in S\\
L(f_0)_p^{[a_{\inv}(\mathcal{L})]}\cup H^1_{ur}(K,T) & p\not\in S
\end{cases}
\]
Once again, for all but finitely many places $L(f_0)_p^{[a_{\inv}(\mathcal{L})]}$  is a union of cosets of $H^1_{ur}(K_p,T)$. One still has $0\in H^1_{\mathcal{L}(f_0)^{[a_{\inv}(\mathcal{L})]}}(K,T)$, so this family is viable as well. Thus, we have a containment of nonempty Selmer sets
\[
H^1_{\mathcal{L}(f_0)^{[a_{\inv}(\mathcal{L})]}}(K,T) \subseteq H^1_{\mathcal{L}(f_0)}(K,T).
\]

By construction, it follows that for any $f\in H^1_{\mathcal{L}(f_0)^{[a_{\inv}(\mathcal{L})]}}(K,T)$ the invariant is given by
\begin{align*}
\inv(f) &= \prod_{p\in S} p^{\nu_p(\inv(f))}\cdot\prod_{\substack{p\not\in S\\f|_{I_p}\ne 1}} p^{a_{\inv}(\mathcal{L})}\\
&\ge C_2\ram_S(f)^{a_{\inv}(\mathcal{L})}.
\end{align*}
for some positive constant $C_2$ determined by the minimal values of $\nu_p\inv$ on the finite sets $L(f_0)_p^{[a_{\inv}(\mathcal{L})]}$ for the finitely many $p\in S$.

By combining the two embeddings, we produce a lower bound
\[
|H^1_{\inv,\mathcal{L}}(K,T;X)| \ge \left|H^1_{\ram_S,\mathcal{L}(f_0)^{[a_{\inv}(\mathcal{L})]}}\left(K,T;\left(C_1C_2^{-1} X\right)^{1/a_{\inv}(\mathcal{L})}\right)\right|.
\]
Step 2 applies to the lower bound, so we can produce the order of magnitude. One sees directly from the definitions that
\[
  a_{\ram_S}(\mathcal{L}(f_0)^{[a_{\inv}(\mathcal{L})]}) = 1 \quad \text{and} \quad b_{\ram_S}(K,\mathcal{L}(f_0)^{[a_{\inv}(\mathcal{L})]}) = b_{\inv}(K,\mathcal{L}),
\]
as ``$\inv$" at all but finitely many places is preserved under both embeddings of Selmer sets and $L(f_0)_p^{[a_{\inv}(\mathcal{L})]}$ only contains elements of index $a_{\inv}(\mathcal{L})$ or unramified elements (i.e. index $\infty$). Thus, we produce a lower bound of the desired magnitude proving that the main terms do not cancel.

\subsection{Proof of Corollary \ref{cor:surj}}

Corollary \ref{cor:surj} is then a direct consequence of Theorem \ref{thm:comparisonthm}. We summarize the proof here, but we refer to \cite{alberts2019}, Theorem 5.3 for the details.

\begin{proof}[Proof of Corollary \ref{cor:surj}]
We remark that if $f\sim f'$ under the coboundary relation, then there exists a $t\in T$ such that $(f*\pi)(G_K)=t(f*\pi)(G_K)t^{-1}$. Therefore the $T$-conjugacy class of the image is preserved under the coboundary relation, so in particular surjectivity is preserved under the coboundary relation and the numerator in the corollary statement is well-defined.

That the limit exists follows from the following inclusion-exclusion argument. For a subgroup $H \leq G$, let $\mathcal{L}(H\cap T) = (L_p\cap H^1(K_p,H\cap T))_p$. (Here, and in what follows, we suppress pullbacks and pushforwards along the inclusion $H \intsec T \hookrightarrow T$.) Then, as long as there exists at least one $f_H\in H^1_{\mathcal{L}}(K,T)$ such that $(f_H*\pi)(G)\le H$, it follows that
\begin{align*}
\{f\in H^1_{\mathcal{L}}(K,T;X) : (f*\pi)(G_K) \le H\} = f_H * H^1_{\mathcal{L}(H\cap T),\inv_{f_H}}(K,H\cap T;X)
\end{align*}
where $\inv_{f_H}(f)=\inv(f_H*f)$.

It is clear that $\mathcal{L}(H\cap T)$ and $\inv_{f_H}$ inherit the hypotheses of Theorem \ref{thm:comparisonthm} from $\mathcal{L}$ and ``$\inv$" away from the places ramified in $f_H$, except for viability. So Theorem \ref{thm:comparisonthm} implies that
\begin{align*}
&\Size{\{f\in H^1_{\mathcal{L}}(K,T;X) : (f*\pi)(G_K) \le H\}}\\
&\sim c_{\inv_{f_H}}(K,\mathcal{L}(H\cap T)) X^{1/a_{\inv_{f_H}}(\mathcal{L}(H\cap T))}(\log X)^{b_{\inv_{f_H}}(\mathcal{L}(H\cap T))-1},
\end{align*}
where $c_{\inv_{f_H}}$ is positive or $0$ according as viability holds or not for $H$.

Let $\mu_G$ be the M\"obius function on the poset of $T$-conjugacy classes of subgroups $H\le G$, ordered by inclusion up to conjugation, so that
\begin{align*}
&\frac{\Size{\{f\in H^1_{\mathcal{L}}(K,T;X) : f*\pi\text{ surjective}\}}}{|H^1_{\mathcal{L}}(K,T;X)|}\\
&\sim \frac{\ds\sum_H \mu_G(H)c_{\inv_{f_H}}(K,\mathcal{L}(H\cap T)) X^{1/a_{\inv_{f_H}}(\mathcal{L}(H\cap T))}(\log X)^{b_{\inv_{f_H}}(\mathcal{L}(H\cap T))-1}}{c_{\inv}(K,\mathcal{L}) X^{1/a_{\inv}(\mathcal{L})}(\log X)^{b_{\inv}(\mathcal{L})-1}}.
\end{align*}
As the summation is a finite sum of functions of the form $c X^a (\log X)^{b-1}$, the limit certainly exists (we know the limit is not infinite because the original ratio is bounded between $0$ and $1$).

The proofs of (a), (b), and (c) are referred to \cite{alberts2021}. The extra reasoning we need to provide here is that viability implies the existence of a \emph{surjective} coclass, not just any coclass:
\begin{enumerate}
\item[(a)] In this case, \cite{alberts2021} bounds the numerator below by
\[
|H^1_{\mathcal{L}_{\pi}}(K,T;X)|,
\]
where the family $\mathcal{L}_\pi$ is given by
\[
(L_\pi)_p = \begin{cases}
\{0\} & p\in U\\
L_p & \text{else}
\end{cases}
\]
and $U$ is chosen to be a set of places unramified in $\pi$ such that $\{\pi(\Fr_p) : p\in U\} = G$. Without loss of generality, $U$ can be chosen disjoint from $\widetilde{S}$, so that Theorem \ref{thm:viability} implies the viability of $\mathcal{L}_\pi$. Theorem \ref{thm:correctedmain} then gives an asymptotic lower bound.
\item[(b)] Part (c) with the product-of-ramified-primes invariant implies that there exists some $f$ for which $f*\pi$ is surjective. Replacing $\pi$ with $f*\pi$ to define the action, we then apply part (a). Any reference to viability need only be made in parts (a) and (c).
\item[(c)] In this case, it is proven that all but the leading term of the inclusion-exclusion tend to zero in the limit. Thus, the numerator and denominator are necessarily asymptotic to each other. As it was not required to reference viability, this part implies that viability implies the existence of a surjective coclass.
\end{enumerate}


In order to prove part (ii), it suffices to show that $\disc_\pi$ is admissible and Frobenian. This proven in \cite[Lemma 5.1]{alberts2019}.
\end{proof}

\section{Non-periodic \texorpdfstring{$w$}{w}}

In all of the following examples, $w(f)$ will be a function of a complex variable $s\in \C$ so that
\[
\sum_{f\in H^1(K,T)} w(f)
\]
is a Dirichlet series, similar to the periodic example in Proposition \ref{prop:wperiodic}. However, these examples will not be periodic and we will find that Theorem \ref{thm:Poisson} decomposes this Dirichlet series as an infinite sum of Euler products.

We can still reference the Malle-Bhargava principle, which suggests that sum should have the same ``order of magnitude" as $\hat{w}(0)$. Although this is not necessarily true for arbitrary infinite sums (even sums of Euler products), we are able to give evidence that this is close to the truth.

\subsection{Infinitely many local conditions}

\begin{proposition}\label{prop:wnonperiodic}
Let $K$ be a number field, $S$ a finite set of places of $K$, and $T$ a Galois module. Define the function $w:H^1(\A_K,T)\rightarrow \C$ by
\[
w(f) = \begin{cases}
\mathcal{N}_{K/\Q}(\inv(f))^{-s} & f\in \prod_p L_p\\
0 & f\not\in \prod_p L_p,
\end{cases}
\]
where
\begin{enumerate}[(a)]
\item{
$\mathcal{L}=(L_p)$ is a family of subsets $L_p\subset H^1(K_p,T)$ such that for all $p\not\in S$, $H^1_{ur}(K_p,T)\subset L_p$,
}

\item{
$\inv$ is an admissible ordering as defined in \cite{alberts2019}.
}
\end{enumerate}
Then $w$ satisfies the hypotheses of Theorem \ref{thm:Poisson}.
\end{proposition}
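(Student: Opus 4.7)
The plan is to verify the three hypotheses (a), (b), (c) of Theorem \ref{thm:Poisson-0} for $G = H^1(\A_K, T)$ and $G_0 = \im(\iota_T)$, adapting the template of Proposition \ref{prop:wperiodic} but inserting a periodic majorant to compensate for the loss of periodicity.

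First I would show that $|w|$ is integrable on $G$ for $\Re(s)$ sufficiently large via the standard Euler-product bound
\[
\int_G |w(f)|\,df \ll \prod_p\left(1 + O(\mathcal{N}_{K/\Q}(p)^{-a(w)\Re(s)})\right),
\]
which converges absolutely for $\Re(s) > 1/a(w)$, with $a(w)$ defined as in the proof of Proposition \ref{prop:wperiodic}. For conditions (a) and (b), the idea is to dominate $w$ by an auxiliary periodic function. For each $p \notin S$, let $L_p^+ = L_p + H^1_{ur}(K_p, T)$ be the smallest $H^1_{ur}$-translation-invariant superset of $L_p$, and set $L_p^+ = L_p$ for $p \in S$. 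Define
\[
w^+(f) = \begin{cases} \mathcal{N}_{K/\Q}(\inv(f))^{-\Re(s)} & f \in \prod_p L_p^+,\\ 0 & \text{otherwise}. \end{cases}
\]
Because admissible orderings are invariant under translation by unramified coclasses and each $L_p^+$ is by construction closed under such translations, $w^+$ is $Y_S$-periodic, and the argument of Proposition \ref{prop:wperiodic} applied to $w^+$ gives
\[
\int_{G_0} |w(x+z)|\,dz \leq \int_{G_0} w^+(x+z)\,dz \leq \frac{\#(G_0 \cap Y_S)}{\mu(Y_S)}\int_G w^+(z)\,dz < \infty,
\]
using $|w| \leq w^+$ pointwise; this verifies (a) uniformly in $x$. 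Condition (b) then follows from Tonelli via $\int_{G/G_0} |w'(x)|\,dx \leq \int_G |w(x)|\,dx < \infty$.

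The principal obstacle is condition (c), which demands that $\widehat{w'}$ be $L^1$ on the discrete group $(G/G_0)^\vee \cong G_0^\perp$; the majorant trick fails here because Fourier transforms do not respect pointwise inequalities, and unlike in the periodic setting $\widehat{w'}$ no longer has finite support. Using the Poitou-Tate identification of $G_0^\perp$ with $\im(\iota_{T^*}) \subseteq H^1(\A_K, T^*)$, the task becomes proving the summability
\[
\sum_{g \in H^1(K, T^*)} \left\lvert\prod_p \hat{w}_p(g|_{G_{K_p}})\right\rvert < \infty.
\]
My plan is to combine two ingredients: a local cancellation bound showing that $|\hat{w}_p(g_p)|$ is smaller than $|\hat{w}_p(0)|$ by a factor of order $\mathcal{N}_{K/\Q}(p)^{-a(w)\Re(s)}$ at every place $p \notin S$ for which $g_p \notin H^1_{ur}(K_p, T^*)$ --- this follows from orthogonality of characters under the local Tate pairing together with $H^1_{ur}(K_p, T) \subseteq L_p$ --- and the Chebotarev density theorem, which ensures that every nonzero global $g$ violates the unramified-annihilator condition at a positive proportion of primes. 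The hard part will be making this heuristic rigorous by balancing the local savings against the growth of the count of $g$ with prescribed ramification locus, in the spirit of the summability arguments of Frei-Loughran-Newton \cite{frei-loughran-newton2018}.
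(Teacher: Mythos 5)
Your handling of hypotheses (a) and (b) is the same as the paper's: the paper dominates $|w|$ by the periodic function $\bar w(f)=\mathcal{N}_{K/\Q}(\inv(f))^{-\Re(s)}$ (the case $L_p=H^1(K_p,T)$ for all $p$), which falls under Proposition \ref{prop:wperiodic}, and then repeats that computation; your majorant $w^+$ built from $L_p^+=L_p+H^1_{ur}(K_p,T)$ is a tighter variant of the same device and is equally valid. The divergence is at hypothesis (c). The paper's proof simply says the remainder ``proceeds exactly like the proof of Proposition \ref{prop:wperiodic},'' where (c) was discharged by noting that $\widehat{w'}$ is bounded (because $w'$ is $L^1$) and hence integrable on the compact group $G^{\vee}/G_0^{\perp}$ --- an argument that does not use periodicity and so transfers verbatim under the paper's formulation of Theorem \ref{thm:Poisson-0}. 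You instead read (c) as requiring absolute summability of $\hat w$ over the discrete group $\im(\iota_{T^*})$, correctly observe that the finite-support argument from the periodic case is no longer available, and then stop at a plan. As a proof, that is where the gap sits: ``the hard part will be making this heuristic rigorous'' leaves hypothesis (c) unverified, so the proposal is incomplete as written.

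That said, the step you leave open closes with exactly the two ingredients you name, and no delicate balancing against moments of $L$-functions is needed at this stage (that only enters when one wants the \emph{value} of the dual sum, not its convergence). For $p\notin S$, if $g_p\notin H^1_{ur}(K_p,T^*)=H^1_{ur}(K_p,T)^{\perp}$, then $\sum_{f_p\in H^1_{ur}(K_p,T)}\langle f_p,g_p\rangle=0$, so the unramified part of $\hat w_p(g_p)$ vanishes and $|\hat w_p(g_p)|=O(\mathcal{N}_{K/\Q}(p)^{-a(w)\Re(s)})$, while $|\hat w_p(g_p)|\le 1+O(\mathcal{N}_{K/\Q}(p)^{-a(w)\Re(s)})$ otherwise; hence $|\hat w(g)|\ll C^{\omega(\mathfrak{d}(g))}\mathcal{N}_{K/\Q}(\mathfrak{d}(g))^{-a(w)\Re(s)}$, where $\mathfrak{d}(g)$ is the product of the primes outside $S$ at which $g$ ramifies. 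Since the number of $g\in H^1(K,T^*)$ with $\mathfrak{d}(g)=\mathfrak{d}$ is $\ll C^{\omega(\mathfrak{d})}$ (Greenberg--Wiles with local conditions ``unramified outside $S$ and the support of $\mathfrak{d}$''), the dual sum is dominated by $\prod_p\bigl(1+C'\mathcal{N}_{K/\Q}(p)^{-a(w)\Re(s)}\bigr)$, which converges in the same half-plane $\Re(s)>1/a(w)$ as your primal estimate. Adding this paragraph would complete your argument, and would in fact treat hypothesis (c) more carefully than the paper does.
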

\begin{proof}
Simply note that $|w(f)|$ is bounded above by $\bar{w}(f) = |\mathcal{N}_{K/\Q}(\inv(f))|^{-s}$, the case that occurs when $L_p = H^1(K_p, T)$ is as large as possible. Note that $\bar{w}(f)$ is $Y_S$-periodic and satisfies the hypotheses of Theorem \ref{thm:Poisson} by Proposition \ref{prop:wperiodic}. The remainder of the proof proceeds exactly like the proof of Proposition \ref{prop:wperiodic}.
\end{proof}

The difference between Propositions \ref{prop:wnonperiodic} and \ref{prop:wperiodic} is the removal of the translation invariance requirement on the family of allowable local conditions $\mathcal{L}$. The only condition we impose is that all but finitely many places allow for any unramified behavior, which we need to assume in order to avoid specifying the splitting types of infinitely many unramified primes (if we did specify these, it might happen that no coclass satisfies all the local conditions!)

In this setting, Theorem \ref{thm:Poisson} tells us that
\[
\sum_{f\in H^1(K,T)}w(f) = \frac{|H^0(K,T)|}{|H^0(K,T^*)|} \sum_{g\in H^1(K,T^*)} \hat{w}(g)
\]
is an infinite sum of Euler products. In order to apply a Tauberian theorem to get asymptotic information with these kinds of local restrictions, some more work needs to be done to uniformly control the order of $\hat{w}(g)$.

We present the simplest example of this phenomenon below:

\begin{example}
Let $K=\Q$, $T=C_2$ have the trivial action, and for each prime $p \nmid 2\infty$, choose a subset $L_p$ with
\[
  H^1_{ur}(\Q_p,C_2) \subsetneq L_p \subsetneq H^1(\Q_p,C_2);
\]
necessarily $\size{L_p} = 3$. This amounts to choosing one of the two ramified quadratic extensions of $\Q_p$; the easiest choice, which we will make though it is not essential, is
\[
  L_p = H^1_{ur}(\Q_p,C_2) \union \{h_p\}, \quad h_p \text{ corresponds to } \Q_p(\sqrt{p}).
\]
Complete the family $(L_p)$ of local conditions by setting
\[
  L_2 = H^1_{ur}(\Q_2,C_2), \quad L_\infty = H^1(\R,C_2).
\]
Then counting cohomology classes satisfying the local conditions $L_p$, with respect to discriminant, amounts to counting squarefree integers $D \equiv 1$ mod $4$ such that, for every $p|D$, $D/p$ is a square modulo $p$: a cute question in arithmetic statistics, which we have not seen before.

If we define $w:H^1(\Q,C_2)\rightarrow \C$ as in Proposition \ref{prop:wnonperiodic} with $\mathcal{L}$ as above and the usual discriminant $\disc$, then it follows (since $C_2$ is its own Tate dual) that
\[
\sum_{f\in H^1(\Q,C_2)} w(f) = \sum_{g\in H^1(\Q,C_2)}\hat{w}(g)\,.
\]
For an individual $g$, we see that
\begin{align*}
    \hat{w}(g) &= \prod_{p\nmid 2\infty}\left(\frac{1}{|H^0(\Q_p,C_2)|}\sum_{f_p\in L_p}\langle f_p,g|_{G_{\Q_p}}\rangle p^{-\nu_p(\disc(f_p))s}\right)\\
    &=\prod_{p\mid \disc(g)}\left(\frac{\langle h_p,g|_{\Q_p}\rangle}{2}p^{-s}\right) \prod_{p\nmid 2\disc(g)\infty}\left(1+\frac{\langle h_p, g|_{\Q_p}\rangle}{2}p^{-s}\right).
\end{align*}
The first product is of the form $\pm 2^{-\omega(\disc(g))}|\disc(g)|^{-s}$, while the second product is like an $L$-function. If $\chi_g$ is the character of $g$, then
\[
\prod_{p\nmid 2\disc(g)\infty}\left(1+\frac{\langle h_p, g|_{\Q_p}\rangle}{2}p^{-s}\right) = \sum_{2\nmid n} \chi_g(n) 2^{-\omega(n)}\mu^2(n) n^{-s},
\]
which can be shown to be (conditionally) convergent and nonzero at $s=1$ unless $g=0$. If $g=0$, it has a pole of order $1/2$ at $s=1$. In order to conclude that $\hat{w}(0)$ really is the main term, we would need to show that (up to the signs of the summands)
\[
\sum_{g\in H^1(\Q,C_2)}2^{-\omega(\disc(g))}\size{\disc(g)}^{-s} \sum_{2\nmid n} \chi_g(n) 2^{-\omega(n)}\mu^2(n) n^{-s}
\]
is of the same or smaller ``order of magnitude". This amounts to computing the moments of $L$-function look-alikes, and while in principle this is an accessible question we will not address it in this paper.
\end{example}

\bibliographystyle{abbrv}
\bibliography{BrandonStylePlusBib/BAreferencesV2019}
\end{document}